\newcommand{\nbanac}[1]{b_{#1}}
\newcommand{\Catalan}[1]{C_{#1}}
\newcommand{\Bessel}[2]{J_{#1}(#2)}
\newcommand{\Exponential}[1]{exp\left({#1}\right)}
\newcommand{\hookformula}[1]{NA \left( {#1} \right)}
\newcommand{\pattern}{\begin{array}{c}\includegraphics[scale=1.0]{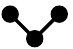}\end{array}}
\newcommand{\tmppp}{parallelogram polyomino}
\newcommand{\pp}{\tmppp }
\newcommand{\pps}{\tmppp es }
\def\leftedgecolor{red}
\def\rightedgecolor{blue}
\def\T{{\mathcal{T}}}
\def\funT{\mathsf{T}}
\def\myS{S}
\def\myn{n_0}
\newcommand\pref[1]{(\ref{#1})}
\def\ie{{\em i.e. }}
\title{Combinatorics of non-ambiguous trees}
\author{Jean-Christophe Aval, Adrien Boussicault, Mathilde Bouvel, Matteo Silimbani \footnote{LaBRI - CNRS, Universit\'e de Bordeaux, 351 Cours de la Lib\'eration, 33405 Talence, France. All authors are supported by ANR -- PSYCO project (ANR-11-JS02-001).}}
\date{~}
\newtheorem{theorem}{Theorem}
\newtheorem{proposition}[theorem]{Proposition}
\newtheorem{lemma}[theorem]{Lemma}
\newtheorem{corollary}[theorem]{Corollary}
\newtheorem{remark}[theorem]{Remark}
\begin{document}
\maketitle
\begin{abstract}
This article investigates combinatorial properties of non-ambiguous trees.
These objects we define may be seen either as binary trees drawn on a grid with some constraints,
or as a subset of the tree-like tableaux previously defined by Aval, Boussicault and Nadeau.
The enumeration of non-ambiguous trees satisfying some additional constraints allows us
to give elegant combinatorial proofs of identities due to Carlitz, and to Ehrenborg and Steingr{\'{\i}}msson.
We also provide a hook formula to count the number of non-ambiguous trees with a given underlying tree.
Finally, we use non-ambiguous trees to describe a very natural bijection between parallelogram polyominoes and binary trees.
\end{abstract}

\section{Introduction}

It is well known that the Catalan numbers $\Catalan{n}=\frac{1}{n+1}{2n \choose n}$ enumerate many combinatorial objects, such as binary trees and parallelogram polyominoes. Several bijective proofs in the literature show that \pps are enumerated by Catalan numbers, the two most classical being Delest-Viennot's bijection with Dyck paths~\cite{delvie} and Viennot's bijection with bicolored Motzkin paths~\cite{delvie}.

In this paper we demonstrate a bijection -- which we believe is more natural -- between binary trees and parallelogram polyominoes. In some sense, we show that \pps may be seen as two-dimensional drawings of binary trees. This point of view gives rise to a new family of objects -- we call them {\em non-ambiguous trees} -- which are particular compact embeddings of binary trees in a grid.

The tree structure of these objects leads to a hook formula for the number of non-ambiguous trees with a given underlying tree.
Unlike the classical hook formula for trees due to Knuth (see~\cite{knuth}, $\S 5.1.4$, Exercise 20),
this one is defined on the edges of the tree.

Non-ambiguous trees are in bijection with
permutations such that all their (strict) excedances stand at the beginning of the permutation word.
Ehrenborg and Steingr{\'{\i}}msson in~\cite{Ehrenborg2000284} give a closed formula
(involving Stirling numbers of the second kind) for the number of such permutations.
We show that this formula  can be easily proved using non-ambiguous trees
and a variation of the insertion algorithm for tree-like tableaux introduced in~\cite{abn}.
Indeed, non-ambiguous trees can also be seen as a subclass of tree-like tableaux,
objects defined in~\cite{abn} and that are in bijection with permutation tableaux~\cite{steiwil07} or alternative tableaux~\cite{nad11,vie07}.

A particular subclass of non-ambiguous trees leads to unexpected combinatorial interpretations.
We study complete non-ambiguous trees, defined as non-ambiguous trees such that their
underlying binary tree is complete,
and show that their enumerating sequence is related to the formal power series of the logarithm of the Bessel function of order $0$.
This gives rise to new combinatorial interpretations of some identities due to Carlitz~\cite{car},
and to the proof of a related identity involving Catalan numbers, which had been conjectured by P. Hanna.

The paper is organized as follows: in Section~\ref{definitions} we define non-ambiguous trees.
Then, in Section~\ref{sec:enumeration} we give the enumeration of non-ambiguous trees satisfying certain constraints:
those contained in a given rectangular box, and those with a fixed underlying tree.
Section~\ref{enumeration_anac} introduces the family of complete non-ambiguous trees,
studies the relations between this family and the Bessel function,
and proves combinatorial identities involving Catalan numbers and the sequence enumerating complete non-ambiguous trees.
In Section~\ref{polyominoes} we describe our new bijection between binary trees and parallelogram polyominoes.
To conclude, we present in Section~\ref{sec:open} some perspectives related to our study.

\section{Definitions and notations}
\label{definitions}

In this paper, trees are embedded in a bidimensional grid $\mathbb{N}\times\mathbb{N}$.
The grid is not oriented as usual:
the $x$-axis has south-west orientation, and the $y$-axis has south-east orientation, as shown on Figure~\ref{grid}.

\begin{figure}[H]
	\begin{minipage}[b]{.46\linewidth}
		$$
		\begin{array}{c}
		\includegraphics[scale=1.0]{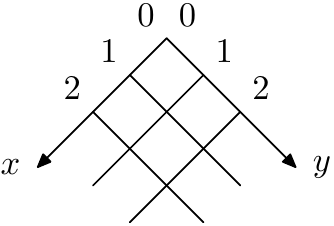}
		\end{array}
		$$
		\vspace{-.6cm}
		\caption{The underlying grid for non-ambiguous trees\label{grid}}
	\end{minipage} \hfill
	\begin{minipage}[b]{.46\linewidth}
		$$
		\begin{array}{c}
		\includegraphics[scale=1.0]{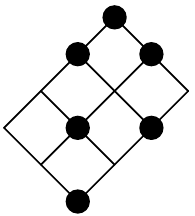}
		\end{array}
		\longleftrightarrow
		\begin{array}{c}
		\includegraphics[scale=1.0]{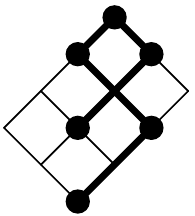}
		\end{array}
		$$
		\vspace{-.6cm}
		\caption{The edges of a non-ambiguous tree are not necessary\label{notnecesary}}
	\end{minipage}
\end{figure}

Every $x$-oriented (resp. $y$-oriented) line will be called column (resp. row).
Each column (resp. row) on this grid is numbered with an integer corresponding to its $y$ (resp. $x$) coordinate. A vertex $v$ located on the intersection of two lines has the coordinate representation: $(X(v),Y(v))$.

A non-ambiguous tree may be seen as a binary tree embedded in the grid in such a way that its edges may be recovered from the embedding of its vertices in the grid (Figure~\ref{notnecesary}). In other words, the vertices determine the tree without ambiguity, whence the name of these objects.

Formally, a \emph{non-ambiguous tree} of size $n$ is a set $A$ of $n$ points $(x,y)\in\mathbb{N}\times\mathbb{N}$ such that:
\begin{enumerate}
\item \label{condition_1_ana} $(0,0)\in A$; we call this point the \emph{root} of $A$;
\item \label{condition_2_ana} given a non-root point $p\in A$, there exists one point $q\in A$ such that $Y(q)<Y(p)$ and $X(q)=X(p)$, or one point $r\in A$ such that $X(r)<X(p)$, $Y(r)=Y(p)$, but not both (which means that the pattern $\pattern$ is avoided);
\item \label{condition_3_ana} there is no empty line between two given points: if there exists a point $p\in A$ such that $X(p)=x$ (resp. $Y(p)=y$), then for every $x'<x$ (resp. $y'<y$) there exists $q\in A$ such that $X(q)=x'$ (resp. $Y(q)=y'$).
\end{enumerate}

Figure~\ref{analist} shows some examples and counterexamples of non-ambiguous trees.

\begin{figure}[H]
 \centering
 \subfigure[Four non-ambiguous trees]
   {$\begin{array}{c}
\includegraphics[scale=0.8]{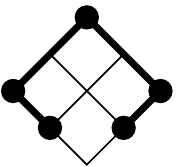}
\end{array}
\begin{array}{c}
\includegraphics[scale=0.8]{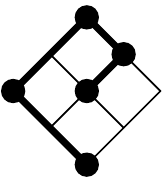}
\end{array}
\begin{array}{c}
\includegraphics[scale=0.8]{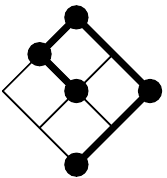}
\end{array}
\begin{array}{c}
\includegraphics[scale=0.8]{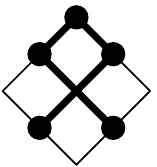}
\end{array}$
}
 \subfigure[These four are not non-ambiguous trees]
   {$\begin{array}{c}
\includegraphics[scale=0.8]{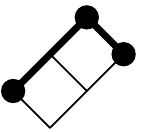}
\end{array}
\begin{array}{c}
\includegraphics[scale=0.8]{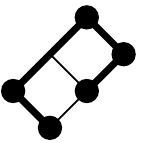}
\end{array}
\begin{array}{c}
\includegraphics[scale=0.8]{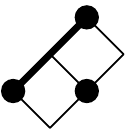}
\end{array}
\begin{array}{c}
\includegraphics[scale=0.8]{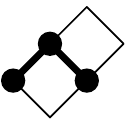}
\end{array}$}
\vspace{-.4cm}
 \caption{Some examples and counterexamples of non-ambiguous trees} \label{analist}
 \end{figure}


It is straightforward that a non-ambiguous tree $A$ has a tree structure: except for the root, every point $p\in A$ has a unique parent, which is the
nearest point $q$ preceding $p$ in the same row (resp. column). In this case, we will say that $p$ is the \emph{right child} (resp. \emph{left child}) of $q$.
In this paper, we orient every edge of a tree from the root to the leaves.
We shall denote by $\funT(A)$ the underlying binary tree associated to $A$.


Figure~\ref{ana_n_4} shows all the non-ambiguous trees of size $4$, grouping inside a rectangle those having the same underlying binary tree.

\begin{figure}[H]
$$
\begin{array}{c}
\includegraphics[scale=1.0]{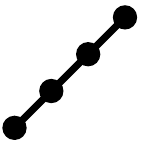}
\end{array}
\begin{array}{c}
\includegraphics[scale=1.0]{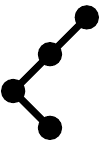}
\end{array}
\begin{array}{c}
\includegraphics[scale=1.0]{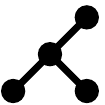}
\end{array}
\begin{array}{c}
\includegraphics[scale=1.0]{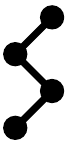}
\end{array}
\begin{array}{c}
\includegraphics[scale=1.0]{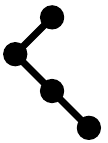}
\end{array}
\begin{array}{c}
\includegraphics[scale=1.0]{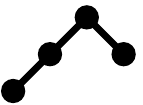}
\end{array}
\boxed{
\begin{array}{c}
\includegraphics[scale=1.0]{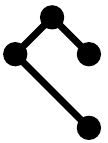}
\end{array}
\begin{array}{c}
\includegraphics[scale=1.0]{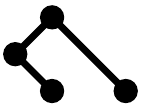}
\end{array}
}
$$
$$
\begin{array}{c}
\includegraphics[scale=1.0]{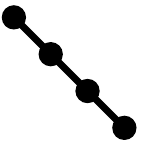}
\end{array}
\begin{array}{c}
\includegraphics[scale=1.0]{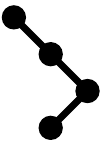}
\end{array}
\begin{array}{c}
\includegraphics[scale=1.0]{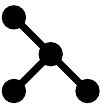}
\end{array}
\begin{array}{c}
\includegraphics[scale=1.0]{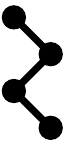}
\end{array}
\begin{array}{c}
\includegraphics[scale=1.0]{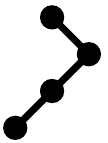}
\end{array}
\begin{array}{c}
\includegraphics[scale=1.0]{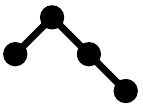}
\end{array}
\boxed{
\begin{array}{c}
\includegraphics[scale=1.0]{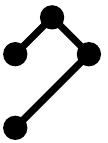}
\end{array}
\begin{array}{c}
\includegraphics[scale=1.0]{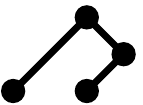}
\end{array}
}
$$
\vspace{-.5cm}
\caption{The $16$ non-ambiguous trees of size $4$}\label{ana_n_4}
\end{figure}

\begin{remark}\label{remark:tlt}
A tree-like tableau~\cite{abn} of size $n$ is a set of $n$ points placed in the boxes of a Ferrers diagram
such that conditions 1, 2, 3 defining non-ambiguous trees are satisfied.
Figure~\ref{example:tlt} shows an example of a tree-like tableau of size $7$.
It should be clear that non-ambiguous trees are in bijection with tree-like tableaux with rectangular shape.
\end{remark}

\begin{figure}[ht]
$$
\begin{array}{c}
\includegraphics[scale=0.8]{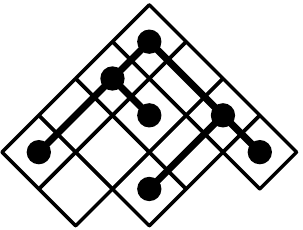}
\end{array}
$$
\caption{A tree-like tableau}\label{example:tlt}
\end{figure}

%
%
%
%
%
%
%
\section{Enumeration of non-ambiguous trees}
\label{sec:enumeration}

Eventhough non-ambiguous trees are new combinatorial objects, their enumeration has been previously studied. Indeed,
non-ambiguous trees of size $n$ are in bijection with permutations of size $n$ with all their strict excedances at the beginning,
whose enumeration has been studied by Ehrenborg and Steingr{\'{\i}}msson in~\cite{Ehrenborg2000284}.
The size preserving bijection (that we do not detail here) between these two families of combinatorial objects
is a consequence of Lemma $5$ in~\cite{steiwil07} and of results proved in~\cite{avbona13}.
The sequence $(a_n)_{n \ge 1}$ counting the number of non-ambiguous tree of size $n$ is referenced in~\cite{oeis} as ${\tt A136127} = [1, 2, 5, 16, 63, 294, 1585, 9692,\dots]$, but no simple formula is known.
In this section, we provide enumerative formulas for non-ambiguous trees with additional constraints.

\subsection{Non-ambiguous trees inside a fixed rectangle}
\label{subsec:fixed-box}

In this section, we make extensive use of Remark~\ref{remark:tlt} and we view non-ambiguous trees as tree-like tableaux of rectangular shape.
For brevity, we write TLT for tree-like tableau in the sequel.
Of particular importance for our purpose is the insertion procedure for TLTs, which gives to these objects
a recursive structure.
Since it is not the central purpose of the present paper, we shall not recall here all the definitions
and properties of TLTs, and we refer the reader to~\cite{abn}.
Figure~\ref{example:tlt} shows an example of a TLT of size 7.
In full generality, the size of TLT is given by the numbers of dotted cells.

Moreover, in this section, we use a different drawing convention for non-ambiguous trees, inherited from the drawing convention for TLTs:
we will draw non-ambiguous trees with the $x$-axis vertical, and the $y$-axis horizontal.

Given a non-ambiguous tree, its $x$-size (resp. $y$-size) may be defined
as the maximum of the $x$-coordinate (resp. $y$-coordinate) of its points.
The aim of this subsection is to give a formula for the number $A(k,\ell)$
of non-ambiguous trees with $x$-size equal to $\ell$ and $y$-size equal to $k$.
Because the size of a TLT is given by its semi-perimeter$-1$,
remark that the size $n$ of such a non-ambiguous tree is given by $n = k + \ell -1$.

We denote by $c(n,j)$ the unsigned Stirling numbers of the first kind,
\ie the number of permutations of size $n$ with exactly $j$ disjoint cycles.

We shall prove that for every integers $n,\ell$, one has:
\begin{equation}
\label{eq:fixed-box}
\sum_{k=1}^n c(n,k)\, A(k,\ell) = n^{\ell-1} \, n! \,.
\end{equation}

Inverting Equation~\pref{eq:fixed-box}, we obtain that it is equivalent to Equation~\pref{eq:ES} below:
\begin{equation}
\label{eq:ES}
A(k,\ell) = \sum_{i=1}^{k} {(-1)}^{k-i}\, S(k,i)\, i!\, i^{\ell -1},
\end{equation}
where $S(k,i)$ denotes the Stirling numbers of the second kind, \ie the number
of partitions of a set of  $k$ elements into $i$ non-empty parts.
It is known~\cite{steiwil07,avbona13} that $A(k,\ell)$ is equal to the number of permutations of size $k+\ell-1$
with exactly $k$ strict excedances in position $1,2,\dots,k$.
Consequently, Corollary 6.6 in~\cite{Ehrenborg2000284} provides a proof of Equation~\pref{eq:fixed-box}.
More precisely, in~\cite{Ehrenborg2000284}, Ehrenborg and Steingr{\'{\i}}msson prove~\pref{eq:ES} by an inclusion-exclusion
argument, and then deduce~\pref{eq:fixed-box} by inversion.

Our goal is to prove Equation~\pref{eq:fixed-box} directly and combinatorially.
For this purpose, we provide combinatorial interpretations of the numbers $c(n,k)$ and $A(k,\ell)$ that appear in Equation~\pref{eq:fixed-box} with
TLTs and non-ambiguous trees respectively. As mentioned in Remark~\ref{remark:tlt}, non-ambiguous trees are nothing but TLTs with a rectangular shape, so that
our interpretation of $c(n,k)$ and $A(k,\ell)$ is then with unified objects. This is the key in proving Equation~\pref{eq:fixed-box} by a combinatorial approach.

Recall that by definition $A(k,\ell)$ is the number of non-ambiguous trees with $x$-size equal to $\ell$ and $y$-size equal to $k$. On the other hand,
we claim that $c(n,j)$ counts the number of TLTs of size $n$ with exactly $j$ points in their first row.
Indeed, this follows by definition of $c(n,j)$ and the fact that TLTs of size $n$ with exactly $j$ points in their first row are in bijection with permutations of size $n$ with exactly $j$ disjoint cycles.
This is a consequence of Theorem $4.2$ in~\cite{Bur07},
re-formulated in terms of TLTs as in~\cite{avbona13}.

\begin{proposition}
\label{prop:ES-TLT}
For every integers $n,\ell$, we have the following identity:
\begin{equation}
\label{eq:ES-TLT}
\sum_{k=1}^n c(n,k)\, A(k,\ell) = n^{\ell-1} \, n! \,
\end{equation}
where $c(n,k)$ denotes the number of TLTs of size $n$ with exactly $k$ points in their first row.
\end{proposition}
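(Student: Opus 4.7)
The plan is to prove Equation~\pref{eq:ES-TLT} via a direct bijection between the objects counted by the two sides, leveraging the insertion procedure for TLTs recalled in~\cite{abn}.

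First, I would fix combinatorial interpretations of each side. The right-hand side $n^{\ell - 1} \cdot n!$ counts pairs $(T', f)$ in which $T'$ is an arbitrary TLT of size $n$ (contributing the factor $n!$) and $f : \{1, \dots, \ell - 1\} \to \{1, \dots, n\}$ is an arbitrary function (contributing $n^{\ell - 1}$). The left-hand side, by the combinatorial identifications of $c(n, k)$ and $A(k, \ell)$ stated just before the proposition, counts pairs $(T, N)$ with $T$ a TLT of size $n$ having $k$ dots in its first row and $N$ a non-ambiguous tree of shape $\ell \times k$, with $k$ summed over the positive integers.

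The bijection I would construct proceeds by an \emph{iterative absorption} procedure. Starting from $(T, N)$, at each of $\ell - 1$ steps I peel off one row of $N$, processed from the bottom upward, and absorb its information into the current TLT via a modified insertion that simultaneously updates the TLT (possibly changing the first-row count, which corresponds to adjusting the $y$-size of the residual non-ambiguous tree) and records a value in $\{1, \dots, n\}$. After $\ell - 1$ steps the residual non-ambiguous tree consists of its top row only, which carries no further information since a non-ambiguous tree of shape $1 \times k$ is unique; we then read off $T'$ as the resulting TLT and $f$ as the sequence of recorded values. Inverting the construction amounts to repeatedly lifting a dot of the current TLT into a new topmost row, as prescribed by the values of $f$.

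The central difficulty, which I expect to be the main obstacle, is to calibrate the absorption step so that it offers exactly $n$ admissible values at every iteration, producing the clean factor $n^{\ell - 1}$ rather than the falling-factorial count $(n+1)(n+2)\cdots(n+\ell-1)$ that standard TLT insertion would yield on a TLT whose size grows by one at each step. The likely remedy is to index the choices not by border edges of the current TLT but by its $n$ dots, a quantity that is invariant under the absorption. Verifying this count and the bijectivity of the map is the technical heart of the argument, which induction on $\ell$ would organize: the base case $\ell = 1$ reduces at once to the classical identity $\sum_k c(n, k) = n!$ since $A(k, 1) = 1$, and the inductive step amounts to showing $\sum_k c(n, k) A(k, \ell) = n \cdot \sum_k c(n, k) A(k, \ell - 1)$ through the single-step absorption.
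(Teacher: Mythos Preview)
Your overall bijective strategy---interpret both sides combinatorially and connect them by induction on $\ell$, each step contributing a factor of $n$---is exactly the paper's. You have also correctly located the crux: engineering the inductive step so that it offers precisely $n$ choices rather than a count that grows with the current size.

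The gap is that your absorption step is never actually defined. You propose to peel the bottom row of $N$ and simultaneously modify $T$ so that its first-row count matches the new width of $N$, while keeping the size of $T$ equal to $n$ and recording a value in $\{1,\dots,n\}$; but you give no construction accomplishing this, and your suggestion to index the choices by the $n$ dots of $T$ is only a heuristic. Without a concrete map there is nothing whose bijectivity can be checked, so the proposal stops short of a proof.

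The paper fills this gap by \emph{not} keeping $T$ and $N$ separate. It introduces an intermediate set $\T_{n,\ell}$: TLTs of size $n+\ell-1$ whose first $\ell$ rows are of equal length. A simple $\ell$-cut procedure (slice after row $\ell$; delete the empty columns above the cut; restore a suitable first row below it) puts $\T_{n,\ell}$ in bijection with your pairs $(T,N)$. Then an $\ell$-insertion procedure---the standard TLT insertion of~\cite{abn}, but disallowing the $\ell$ vertical border edges at the right of the top $\ell$ rows---gives a bijection $\T_{n,\ell}\times\{1,\dots,n\}\to\T_{n,\ell+1}$, since an element of $\T_{n,\ell}$ has $n+\ell$ border edges in all. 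This excluded-edge trick is precisely the mechanism that yields $n$ choices per step, and the intermediate object $\T_{n,\ell}$ is the idea your sketch is missing.
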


The proof consists in two steps:
\begin{itemize}
\item introduce a set $\T_{n,\ell}$ whose cardinality will be proved to be $n^{\ell-1} \, n!$ (Lemma~\ref{lem:m-insert});
\item prove that the elements of $\T_{n,\ell}$ are in bijection  with pairs $(b,a)$ enumerated
by the left-hand side of~(\ref{eq:ES-TLT}) (Lemma~\ref{lem:m-cut}).
\end{itemize}

We define $\T_{n,\ell}$ as the set of TLTs 
of size $n+\ell-1$ whose first $\ell$ rows are of equal length.
Figure~\ref{fig:TLT-l} shows an element of $\T_{5,2}$.

\begin{figure}[H]
$$
\begin{array}{c}
\includegraphics[scale=0.8]{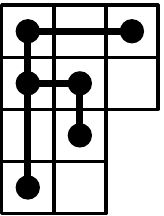}
\end{array}
$$
\caption{A tree-like tableau of $T_{5,2}$\label{fig:TLT-l}}
\end{figure}

To compute the cardinality of $\T_{n,\ell}$, we will use a slight variation of the insertion procedure defined in~\cite{abn}.
This new insertion procedure depends on $\ell$, and we shall call it the {\em $\ell$-insertion procedure}.

\paragraph*{The $\ell$-insertion procedure.}
Let $T$ be an element of $\T_{n,\ell}$. 
The $\ell$-special box of $T$ is defined as
\begin{itemize}
\item the right-most box of the $\ell$-th row, if the $(\ell+1)$-st row is strictly smaller;
\item the right-most dotted box at the bottom of a column, otherwise.
\end{itemize}
Given an integer $m$ in $\{1,\dots,n\}$, we associate to it an edge $e_m$ of the South-East border of $T$
by labeling these edges from South-West to North-East.
Note that we exclude the (vertical) edges $e_{n+1},\dots,e_{n+\ell}$ at the right of the first $\ell$ rows of $T$.
As in the insertion procedure of~\cite{abn}, if $e_m$ is a horizontal (resp. vertical) edge of the border of $T$,
we add a row (resp. column) to $T$ below (resp. to the right of) $e_m$,
composed of empty boxes except for one dotted box: the one below (resp. to the right of) $e_m$.
Next, if the dotted box added is to the left of the $\ell$-special box of $T$, we add a {\em ribbon}
(a connected  set of empty boxes without any $2\times 2$ square) adjacent simultaneously to the $\ell$-special box of $T$
and to the dotted box added.

\smallskip

Figure~\ref{fig:m-insert} illustrates the $\ell$-insertion procedure.

\begin{figure}[H]
\begin{center}
  \includegraphics[width=0.6\textwidth]{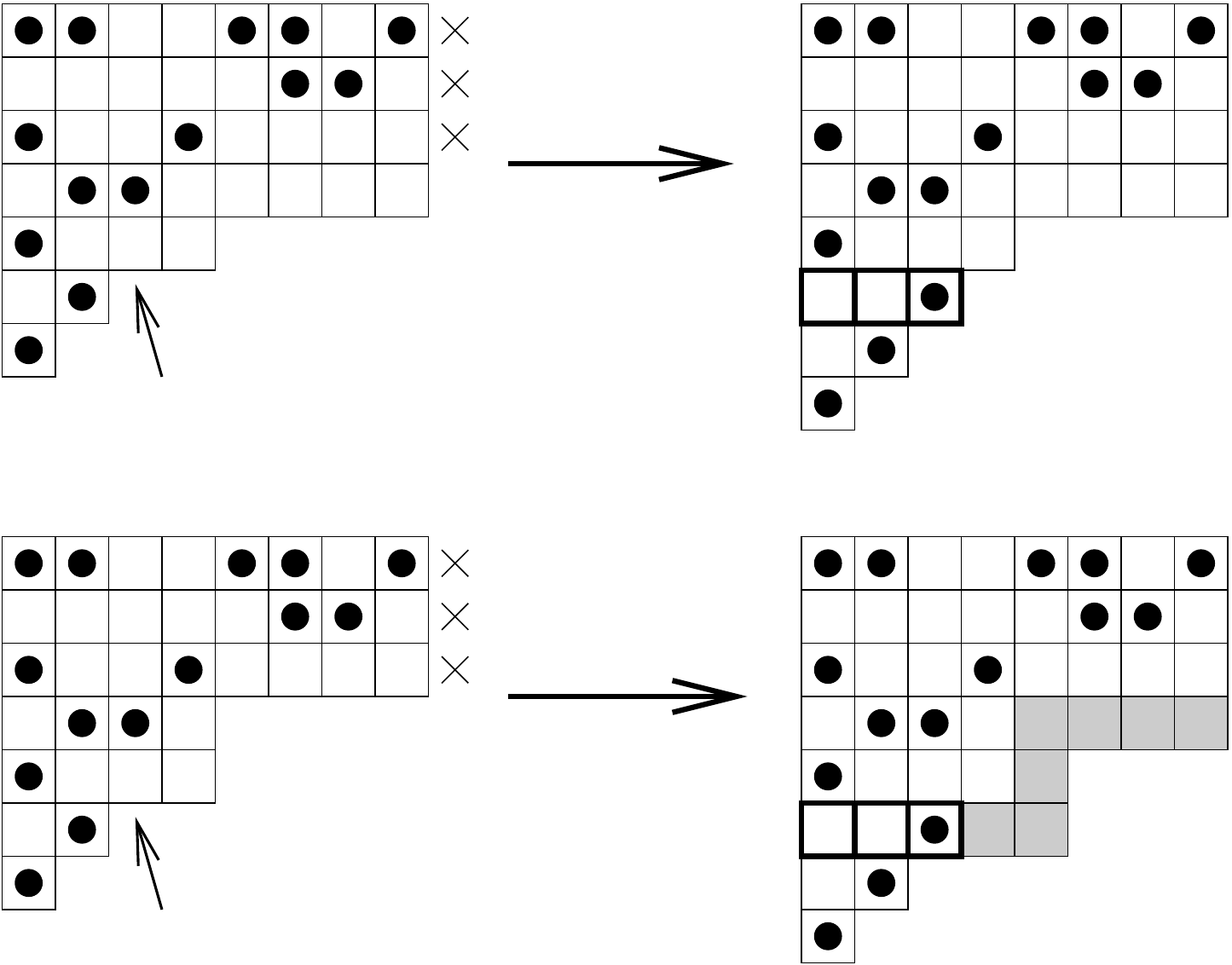}
\caption{The $\ell$-insertion procedure on two examples for $\ell=3$ and $m=5$. \label{fig:m-insert}}
\end{center}
\end{figure}

\begin{lemma}
\label{lem:m-insert}
The cardinality of $\T_{n,\ell}$ is given by
\begin{equation}
\label{eq:m-insert}
\#\T_{n,\ell} = n^{\ell-1} \, n!\,.
\end{equation}
\end{lemma}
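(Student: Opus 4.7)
The plan is to prove the lemma by induction on $\ell$, using the $\ell$-insertion procedure to set up a bijection
\[
\T_{n,\ell} \times \{1,\dots,n\} \longrightarrow \T_{n,\ell+1},
\]
which implies $|\T_{n,\ell+1}| = n \cdot |\T_{n,\ell}|$. For the base case $\ell=1$, the condition that the ``first $1$ row is of equal length'' is vacuous, so $\T_{n,1}$ is the set of all TLTs of size $n$. By the bijection between TLTs and permutations recalled just before Proposition~\ref{prop:ES-TLT} (based on~\cite{Bur07,avbona13}), we have $|\T_{n,1}| = n! = n^{0}\,n!$.

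For the inductive step, I would verify that the $\ell$-insertion applied to a pair $(T,m) \in \T_{n,\ell} \times \{1,\dots,n\}$ lands in $\T_{n,\ell+1}$, by checking three points. First, the procedure adds exactly one dotted cell (the ribbon, when needed, is made of empty cells), so the size grows from $n+\ell-1$ to $n+\ell = n+(\ell+1)-1$, which is the size of elements of $\T_{n,\ell+1}$. Second, the first $\ell+1$ rows of the result have the common length $L$ of the first $\ell$ rows of $T$: if the $(\ell+1)$-st row of $T$ is strictly shorter than $L$, then the $\ell$-special box is the right end of row $\ell$ and the ribbon is designed exactly so as to extend the $(\ell+1)$-st row up to column $L$; if the $(\ell+1)$-st row of $T$ already has length $L$, the $\ell$-special box lies inside a column and the rectangular top of $T$ is preserved by the insertion rule. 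Third, axioms (2) and (3) of a TLT remain satisfied, since ribbons avoid $2\times 2$ squares by definition and only one new dotted cell is introduced.

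I would then define an inverse $\ell$-cut procedure which, given $T' \in \T_{n,\ell+1}$, canonically identifies one dotted cell together with its attached ribbon of empty cells, removes them, and recovers a pair $(T,m) \in \T_{n,\ell} \times \{1,\dots,n\}$. Showing that the $\ell$-insertion and the $\ell$-cut are mutual inverses establishes the bijection, and the induction closes with
\[
|\T_{n,\ell+1}| = n \cdot |\T_{n,\ell}| = n \cdot n^{\ell-1}\,n! = n^{\ell}\,n!.
\]

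The main obstacle lies in the verification of the second property: one must argue case by case that the ribbon added by the $\ell$-insertion always terminates at column $L$ in row $\ell+1$, no matter whether $e_m$ is a horizontal edge (opening a new row), a vertical edge (extending an existing row on its right), or an edge strictly below row $\ell+1$. A related delicate point is to give an explicit combinatorial rule that, in any $T' \in \T_{n,\ell+1}$, unambiguously singles out the ``last-inserted'' cell, so that the $\ell$-cut is well defined and actually inverts the $\ell$-insertion; this is what turns the heuristic bijection into a genuine proof.
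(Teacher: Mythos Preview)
Your approach is essentially the paper's: induction on $\ell$, with the $\ell$-insertion giving a bijection $\T_{n,\ell}\times\{1,\dots,n\}\to\T_{n,\ell+1}$, and the base case $\#\T_{n,1}=n!$. The paper resolves the obstacle you flag at the end with a one-line rule: in any $T'\in\T_{n,\ell+1}$, the last-inserted dotted box is the \emph{rightmost among dotted boxes that sit at the bottom of their column}; removing it (and the attached ribbon) recovers $(T,m)$. Two small remarks: for the base case the paper invokes \cite[Theorem~2.2]{abn} directly rather than the permutation bijection; and you should not call the inverse map the ``$\ell$-cut'', since that name is already used in the paper for the different procedure of Lemma~\ref{lem:m-cut}.
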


\begin{proof}
We prove the above by induction on $\ell$.
By \cite[Theorem 2.2]{abn}, the number of unrestricted TLTs of size $n$ is equal to $n!$, thus (\ref{eq:m-insert}) is true for $\ell=1$.

For the inductive step, we claim that the $\ell$-insertion procedure gives a bijection between $\T_{n,\ell}\times\{1,\dots,n\}$ and $\T_{n,\ell+1}$.
To prove this fact, we first notice that the result of the $\ell$-insertion procedure on $T \in \T_{n,\ell}$ and $e_m$ with $m \in \{1,\dots,n\}$ is a TLT 
whose first $\ell+1$ rows are of equal length,
thus an element of $\T_{n,\ell+1}$.
Next, as explained in~\cite{abn}, we observe that the new box added by $\ell$-insertion in an element of $\T_{n,\ell+1}$
is easy to recognize: it is the rightmost among dotted boxes at the bottom of a column.
Thus we are able to invert the $\ell$-insertion procedure.
This ends the proof of the lemma.
\end{proof}

Now we will send bijectively an element of $\T_{n,\ell}$ on pairs of objects $(b,a)$
enumerated by the left-hand side of~(\ref{eq:ES-TLT}).
The bijection relies on the {\em $\ell$-cut procedure} described below, and illustrated in Figure~\ref{fig:m-cut}.

\paragraph*{The $\ell$-cut procedure.}
Let $T$ be an element of $\T_{n,\ell}$.
We first cut $T$ by putting the first $\ell$ rows in $a'$
and the next rows in $b'$. 
Now we see $b'$ as part of a TLT.
We add to it a first row
whose length equals the width of $a'$.
We observe
that there is exactly one way to put dots in this row to obtain a TLT, that we denote $b$:
we are forced to put  dots in the boxes corresponding
to non-empty columns in $a'$.
Next we remove empty columns in $a'$ to get
a  non-ambiguous tree with $\ell$ rows, denoted $a$.

\begin{figure}[H]
\begin{center}
	$
	\begin{array}{c}
		\includegraphics[width=0.18\textwidth]{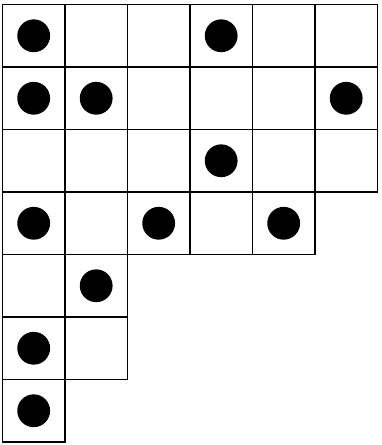}
	\end{array}
	\longrightarrow
	\begin{array}{c}
		\includegraphics[width=0.18\textwidth]{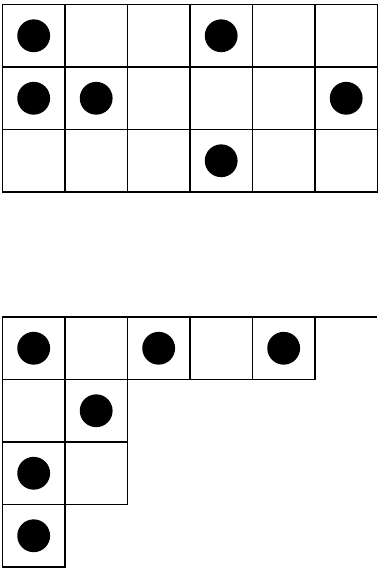}
	\end{array}
	\longrightarrow
	\begin{array}{c}
		\includegraphics[width=0.18\textwidth]{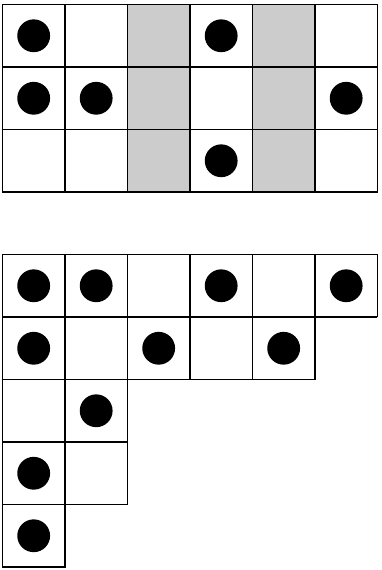}
	\end{array}
	\longrightarrow
	\begin{array}{c}
  		\includegraphics[width=0.18\textwidth]{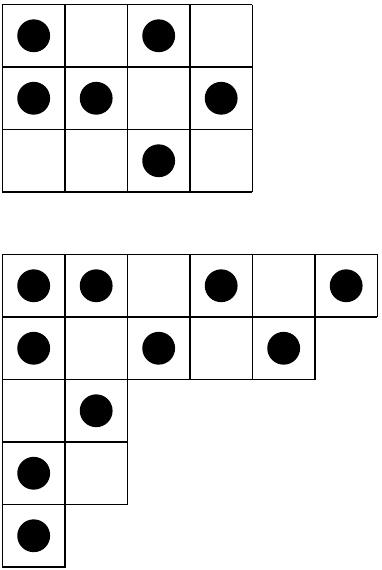}
	\end{array}
	$
\end{center}
\caption{The $\ell$-cut procedure for $\ell=3$. \label{fig:m-cut}} 
\end{figure}

\begin{lemma}
\label{lem:m-cut}
Elements of $\T_{n,\ell}$ are in bijection with pairs $(b,a)$ such that
\begin{enumerate}
\item $b$ is a TLT of size $n$,
\item $a$ is a non-ambiguous tree with $\ell$ rows,
\item the number of dotted boxes in the first row of $b$ and the width of  $a$ are equal.
\end{enumerate}
\end{lemma}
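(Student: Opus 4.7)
The plan is to take the $\ell$-cut procedure described above as the forward direction of the bijection, exhibit an explicit inverse (an ``$\ell$-paste'' procedure), and verify both that each direction lands in the claimed set and that the two compositions are the identity.

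For the forward direction, let $T \in \T_{n,\ell}$ and write $w$ for the common length of its first $\ell$ rows. Since $a$ is obtained from $a'$ by deleting empty columns, its width equals the number of non-empty columns of $a'$, which is exactly the number of dots placed in the new top row of $b$; this verifies condition~(3). Setting $k$ for the width of $a$, the fact that $a$ is a rectangular TLT of shape $\ell \times k$ forces $|a|=\ell+k-1$, and combining this with $|T|=n+\ell-1$ yields $|b|=|b'|+k=n$. For $a$ to be a non-ambiguous tree, one observes that removing empty columns of $a'$ preserves the root at $(0,0)$, the non-emptiness of all rows and retained columns, and the pattern-avoidance condition (erasing columns that contain no dots cannot affect whether any kept dot has a dot strictly above or strictly to its left). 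For $b$ to be a TLT, the shape is Ferrers by construction, and no column is empty since any column of $T$ which is empty in $a'$ must contain a dot in $b'$, while any column non-empty in $a'$ receives a dot in the new top row.

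The delicate point is pattern-avoidance in $b$, which I would handle by case analysis on a non-root dot $p$. If $p$ lies in the new top row, its parent must be to its left, and the root of $b$ (which is added because column $0$ of $a'$ is non-empty, being the column of the root of $T$) provides a left-ancestor in that row. If $p$ lies in $b'$, it inherits a unique parent from $T$, and one must verify that the new top row does not create a second candidate parent: if the parent of $p$ in $T$ lay above in $a'$, then no dot of $T$ was to the left of $p$ in its row, and the dot newly added in the top row of $b$ in $p$'s column (forced because $p$'s column was non-empty in $a'$) provides the unique above-parent in $b$; if instead the parent of $p$ lay in $b'$, then $p$'s column was necessarily empty in $a'$, so no dot is added above $p$ in the new row and no spurious parent appears.

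For the inverse, given a pair $(b,a)$ satisfying conditions~(1)--(3), the \emph{$\ell$-paste} procedure deletes the first row of $b$ to form $b'$, enlarges $a$ to a rectangle $a'$ of shape $\ell \times w$ by inserting empty columns at the positions of the empty cells of the first row of $b$ (well-defined by~(3)), and stacks $a'$ on top of $b'$. A symmetric case analysis, mirroring the one above, shows that the result is a TLT with first $\ell$ rows of equal length, hence an element of $\T_{n,\ell}$. Mutual inversion is then bookkeeping: the empty columns inserted by $\ell$-paste correspond exactly to those deleted by $\ell$-cut, and the dotted cells in the new top row produced by $\ell$-cut record precisely the columns that $\ell$-paste is required to refill. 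The main obstacle in executing this plan is the pattern-avoidance verification for $b$ (and its analogue for $T$); once those are in hand, the rest is routine.
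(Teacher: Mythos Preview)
Your approach is exactly the paper's: the paper simply asserts that the $\ell$-cut procedure is invertible and leaves every verification to the reader, while you spell out the inverse $\ell$-paste and the well-definedness checks in detail.

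One small slip to fix: in your pattern-avoidance analysis for $b$, the claim that ``if the parent of $p$ lay in $b'$ then $p$'s column was necessarily empty in $a'$'' is false when that parent is \emph{above} $p$ inside $b'$ --- there may still be further dots of $T$ higher up in the same column, lying in $a'$. This does not damage your conclusion, however, because an extra dot above $p$ in the new top row can never create a forbidden configuration: the only danger is manufacturing an above-dot for a $p$ whose unique parent in $T$ was to its \emph{left}, and in that sub-case your argument (no dot of $T$ above $p$, hence $p$'s column empty in $a'$, hence no dot added) is correct as stated.
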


\begin{proof}
It should be clear the $\ell$-cut procedure is invertible,
which implies the lemma.
\end{proof}

Putting together Lemmas~\ref{lem:m-insert} and~\ref{lem:m-cut}
gives a bijective proof of Proposition~\ref{prop:ES-TLT}.

\subsection{Non-ambiguous trees with a fixed underlying tree: a new hook formula}
\label{subsec:hookformula}

Let $T$ be a binary tree.
We define $NA(T)$ as the number of non-ambiguous trees $A$ such that
their underlying binary tree $\funT(A)$ is $T$.
The aim of this section is to get a formula for $NA(T)$: this will be done by Proposition~\ref{prop:hook},
which shows that $NA(T)$ may be expressed by a new and elegant hook formula on the edges of $T$.
To do this, we encode any non-ambiguous tree $A$ by a triple $\Phi(A)=(T,\alpha_L,\alpha_R)$
where $T$ is a binary tree, and $\alpha_L$ (resp. $\alpha_R$) is a word called the left (resp. right) code of $A$.
To distinguish the vertices of $A$, we label them by integers from $1$ to the size of $A$, as shown on Figure~\ref{fig:ex_ANA}.

\begin{figure}[H]
$$
A =
\begin{array}{c}
\includegraphics[scale=1.0]{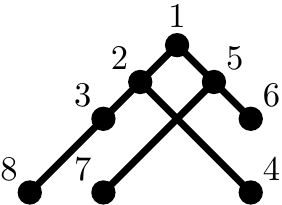}
\end{array}\ ,  \qquad T =
\begin{array}{c}
\includegraphics[scale=1.0]{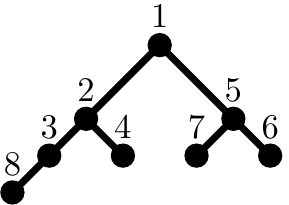}
\end{array}\ .
$$
\vspace{-.5cm}
\caption{A non-ambiguous tree $A$ with labeled vertices, and the associated binary tree $T$}\label{fig:ex_ANA}
\end{figure}

The first entry in $\Phi(A)$ is the underlying binary tree $T$ associated to $A$.
Observe that we keep the labels on vertices when we extract the underlying binary tree.
Now we denote by $V_L$ (resp. $V_R$) the set of the end points of the left (resp. right) edges of $A$, which gives $V_L=\{2,3,7,8\}$ and $V_R=\{4,5,6\}$ on the example in Figure~\ref{fig:ex_ANA}.
The definition of non-ambiguous trees ensures that the set $\{X(v), v\in V_L\}$ is the interval $\{1,\dots,\vert V_L\vert\}$.
Thus for $i=1,\dots, \vert V_L\vert $, we may set $\alpha_L(i)$ as the unique label $v\in V_L$ such that $X(v)=i$,
and we proceed symmetrically for $\alpha_R$.
On the example of Figure~\ref{fig:ex_ANA}, we have: $\alpha_L= 2378$ and $\alpha_R=564$.
Our starting point is the following lemma.

\begin{lemma}\label{Phi}
The application $\Phi$ which sends $A$ to the triple $(T,\alpha_L,\alpha_R)$ is injective.
\end{lemma}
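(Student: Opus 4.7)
The plan is to show that from the triple $\Phi(A)=(T,\alpha_L,\alpha_R)$ one can reconstruct $A$ coordinate by coordinate, so that injectivity follows directly. The key geometric input is the definition of left and right children: if $v$ is a left child of its parent $q$, then $v$ and $q$ lie in the same column, so $Y(v)=Y(q)$; symmetrically, if $v$ is a right child of $q$, then $v$ and $q$ lie in the same row, so $X(v)=X(q)$. I will use these two identities as the propagation rules of the reconstruction.

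First, I would observe that the labeled binary tree $T$ by itself tells us, for every non-root vertex $v$, whether $v$ is a left or a right child. Hence the two sets $V_L$ and $V_R$ can be read off from $T$. Next, by construction, $\alpha_L$ is the sequence of labels of vertices in $V_L$ ordered by increasing $x$-coordinate, and since the set $\{X(v):v\in V_L\}$ is exactly $\{1,\dots,|V_L|\}$, the word $\alpha_L$ recovers the value $X(v)$ for every $v\in V_L$. A symmetric statement gives $Y(v)$ for every $v\in V_R$ from $\alpha_R$.

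The reconstruction of $A$ then proceeds by a top-down traversal of $T$. Set $X(\text{root})=Y(\text{root})=0$. For a non-root vertex $v$ with parent $q$ whose coordinates are already computed, either $v$ is a left child, in which case $X(v)$ is read from $\alpha_L$ and $Y(v)$ is set equal to $Y(q)$, or $v$ is a right child, in which case $Y(v)$ is read from $\alpha_R$ and $X(v)$ is set equal to $X(q)$. This determines both coordinates of every vertex, hence determines the point set $A$. Since this procedure depends only on $(T,\alpha_L,\alpha_R)$, two non-ambiguous trees with the same image under $\Phi$ must coincide, proving injectivity.

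There is no real obstacle: the only thing to check is that the propagation rule $Y(v)=Y(q)$ for a left child (and symmetrically for a right child) is genuinely forced by the definition of non-ambiguous trees, but this is immediate from the fact that the parent of a left child is the nearest preceding point in the same column. One might want to add, as a sanity check, that the triples produced are consistent, but for injectivity this is not needed; it suffices that the reconstruction is well defined and returns $A$ when applied to $\Phi(A)$.
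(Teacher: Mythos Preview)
Your proposal is correct and follows essentially the same approach as the paper: both reconstruct the coordinates of $A$ from $(T,\alpha_L,\alpha_R)$ using the rules $Y(v)=Y(\text{parent}(v))$ for $v\in V_L$ with $X(v)$ read off from $\alpha_L$, and symmetrically for $V_R$. The only cosmetic difference is that the paper phrases this as a system of $2n$ independent linear equations in the $2n$ unknowns $X(v),Y(v)$, whereas you phrase it as an explicit top-down traversal; the content is identical.
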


\begin{proof}
Consider a non-ambiguous tree $A$ with $n$ vertices and $\Phi(A)=(T,\alpha_L,\alpha_R)$. From the definition of non-ambiguous tree, we know that the $X$ and $Y$-coordinates of the root $r$ are $X(r)=Y(r)=0$. Moreover, for every left edge $(s,t)$ of $T$ we have
$$Y(s)=Y(t)\textrm{ and }\alpha_L(X(t))=t,$$
namely, $X(t)$ is the position where $t$ appears in $\alpha_L$.
Similarly, for every right edge $(s,t)$, we have
$$X(s)=X(t)\textrm{ and }\alpha_R(Y(t))=t,$$
namely, $Y(t)$ is the position where $t$ appears in $\alpha_R$.
It is easy to check that we have $2n$ independent equations in $2n$ variables (the $X$ and $Y$-coordinates of the vertices), so we get a unique non-ambiguous tree.
\end{proof}

Lemma~\ref{Phi} allows us to encode a non-ambiguous tree $A$ by a triple $(T,\alpha_L,\alpha_R)$,
where $T$ is a binary tree, and $\alpha_L$ (resp. $\alpha_R)$ is a word in which every label $v\in V_L$
(resp. $V_R$) appears exactly once.
Of course, $\Phi$ is not surjective on such triples. If we take $T=\begin{array}{c}
\includegraphics[scale=0.5]{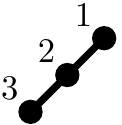}
\end{array}$, it should be clear that $\alpha_L$ is forced to be $23$.
Consequently, our next task is to characterize the pairs of codes $(\alpha_L,\alpha_R)$
which are compatible with a given binary tree $T$, \ie such that $(T,\alpha_L,\alpha_R)$
is in the image of $\Phi$.
In order to describe this characterization, we need to define  partial orders on the sets $V_L$ and $V_R$.
The pairs $(\alpha_L,\alpha_R)$ of compatible codes will be seen to correspond to pairs of linear extensions
of the posets $V_L$ and $V_R$.
The posets are defined as follows: given $a,b\in V_L$ (resp. $V_R$), we say that $a\leq b$ if and only if there exists a path in the oriented tree starting from $a$ and ending at $b$.
Figure~\ref{ex:posets} 
(with minima at the top) illustrates this notion.
\begin{figure}[H]
$$
T =
\begin{array}{c}
\includegraphics[scale=1.0]{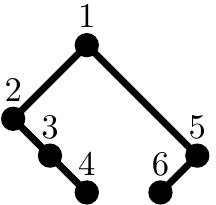}
\end{array}
\hspace{1cm}
V_L =
\begin{array}{c}
\includegraphics[scale=1.0]{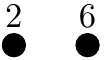}
\end{array}
\hspace{1cm}
V_R =
\begin{array}{c}
\includegraphics[scale=1.0]{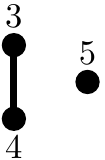}
\end{array}
$$
\vspace{-.5cm}
\caption{
The posets $V_L$ and $V_R$ of a tree $T$
\label{ex:posets}
}
\end{figure}

The next lemma is the crucial step to prove Proposition~\ref{prop:hook}.
\begin{lemma}\label{lemma:crux}
Given a binary tree $T$, the pairs of codes  compatible with $T$
are exactly the pairs $(\alpha_L,\alpha_R)$ where $\alpha_L$ is a linear extension of $V_L$
and $\alpha_R$ is a linear extension of $V_R$.

\noindent Moreover, such pairs $(\alpha_L,\alpha_R)$ are in bijection with non-ambiguous trees with underlying tree $T$.
\end{lemma}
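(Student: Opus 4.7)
The plan is to prove the characterization of compatible codes first; the second assertion then follows at once from Lemma~\ref{Phi}, since an injective map with known image is a bijection onto that image.

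For the easy direction, if $\Phi(A)=(T,\alpha_L,\alpha_R)$, the coordinate identities established inside the proof of Lemma~\ref{Phi} show that along any root-to-vertex path in $T$ the coordinate $X$ is constant on right edges and strictly increases (by $1$) on left edges, and symmetrically for $Y$. Hence whenever $a<b$ in $V_L$ (i.e.\ $b$ is a proper descendant of $a$), the path from $a$ to $b$ ends with the left edge entering $b$, forcing $X(a)<X(b)$, i.e.\ $\alpha_L^{-1}(a)<\alpha_L^{-1}(b)$. So $\alpha_L$ is a linear extension of $V_L$, and the same reasoning handles $\alpha_R$.

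For the converse, given a pair of linear extensions $(\alpha_L,\alpha_R)$ I construct the candidate non-ambiguous tree explicitly by a recursive definition of the coordinates on $T$: set $X(r)=Y(r)=0$, and for a non-root vertex $t$ with $T$-parent $s$, put $X(t)=\alpha_L^{-1}(t)$ and $Y(t)=Y(s)$ if $(s,t)$ is a left edge, and $X(t)=X(s)$ and $Y(t)=\alpha_R^{-1}(t)$ if it is a right edge. Let $A=\{(X(v),Y(v)):v\in V(T)\}$. Condition~\ref{condition_1_ana} is immediate, condition~\ref{condition_3_ana} follows from the fact that the $X$-values used form exactly $\{0,1,\ldots,|V_L|\}$ (and symmetrically for $Y$), and $\Phi(A)=(T,\alpha_L,\alpha_R)$ holds by construction. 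The real work is to check that $A$ is genuinely a non-ambiguous tree with $\funT(A)=T$: namely, that condition~\ref{condition_2_ana} is satisfied and that the grid-parent relation coincides with the parent relation of~$T$.

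The main obstacle is this last verification, and the linear extension hypothesis is exactly what is needed. For each value $y$, the vertices of $T$ with $Y$-coordinate equal to $y$ form a chain $w_0,w_1,\ldots,w_k$ obtained by starting at the root (if $y=0$) or at the unique $u\in V_R$ with $\alpha_R^{-1}(u)=y$ (if $y>0$), and following left children as long as possible. The vertices $w_1,\ldots,w_k$ form a totally ordered sub-chain of the poset $V_L$, and when $y>0$ the deepest left-edge-end ancestor of $w_0$, if any, is a strict $V_L$-predecessor of $w_1$. The linear extension property then forces $X(w_0)<X(w_1)<\cdots<X(w_k)$, so the row-parent of $w_i$ in the grid is $w_{i-1}$, matching $T$; a symmetric argument along columns handles right-edge ends. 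The same chain picture rules out the pattern of condition~\ref{condition_2_ana}: a left-edge end $w_i$ cannot have a column-parent, since any vertex sharing its $X$-coordinate must lie on the right spine strictly below $w_i$ and therefore receives a strictly larger $Y$-coordinate (again by the $\alpha_R$ linear extension). As a by-product, two distinct vertices of $T$ occupy distinct cells, so $A$ has the expected cardinality. Combined with the injectivity from Lemma~\ref{Phi}, this yields the claimed bijection.
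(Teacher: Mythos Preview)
Your argument is correct and follows essentially the same route as the paper: the same recursive placement of points, and the same use of the ``oldest left ancestor'' idea (your ``deepest left-edge-end ancestor of $w_0$'') to force $X(w_0)<X(w_1)$, just repackaged as a row/column chain picture rather than the paper's three separate claims $i$--$iii$. One small slip: in the easy direction you write that $X$ ``strictly increases (by~$1$)'' along left edges, but the jump need not be~$1$ (e.g.\ if two incomparable elements of $V_L$ are interleaved in $\alpha_L$); only strict increase holds, which is all you use.
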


Figure~\ref{anac_from_posets} gives these compatible codes, together with the corresponding
non-ambiguous trees, in the case of the tree $T$ of Figure~\ref{ex:posets}.
\begin{figure}[H]
$$
\begin{array}{c}
(26,534)\\
\includegraphics[scale=1.0]{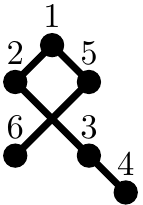}
\end{array}
\begin{array}{c}
(26,354)\\
\includegraphics[scale=1.0]{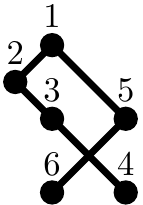}
\end{array}
\begin{array}{c}
(26,345)\\
\includegraphics[scale=1.0]{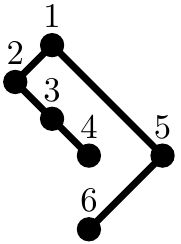}
\end{array}
\begin{array}{c}
(62,534)\\
\includegraphics[scale=1.0]{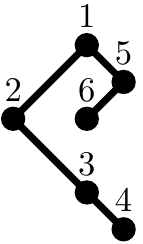}
\end{array}
\begin{array}{c}
(62,354)\\
\includegraphics[scale=1.0]{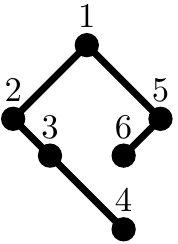}
\end{array}
\begin{array}{c}
(62,345)\\
\includegraphics[scale=1.0]{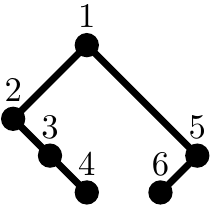}
\end{array}
$$
\vspace{-.5cm}
\caption{
Non-ambiguous trees of the tree $T$ of Figure~\ref{ex:posets}
\label{anac_from_posets}
}
\end{figure}

\begin{proof}
Given a tree $T$, consider the map $\Phi_T$ defined on the set of non-ambiguous trees with underlying tree $T$ by $\Phi_T(A):=(\alpha_L,\alpha_R)$,
where $\Phi(A)=(T,\alpha_L,\alpha_R)$.
We prove
that the image of $\Phi_T$ is $\mathcal{L}(V_L)\times\mathcal{L}(V_R)$,
where we denote by $\mathcal{L}(P)$ the set of linear extensions of a poset $P$.
The second statement in Lemma~\ref{lemma:crux} will then follow, since we deduce from the injectivity of $\Phi$ (Lemma~\ref{Phi}) that $\Phi_T$ is also injective.

First, we prove that $Im\,\Phi_T\subseteq\mathcal{L}(V_L)\times\mathcal{L}(V_R)$.
Without loss of generality, we will prove that $\alpha_L\in\mathcal{L}(V_L)$.
We need to prove that, if $s <_{V_L} t$, then $s$ precedes $t$ in $\alpha_L$, which we shall write $s <_{\alpha_L} t$.
If $s <_{V_L} t$, there exists a path in $T$ starting from $s$ and ending at $t$.
When we go through the path, the $X$-coordinates  of the vertices remain unchanged along  right edges,
while they increase along  left edges.
Since $s\neq t$, we have $X(s)<X(t)$, which is equivalent to $s <_{\alpha_L} t$.

Now the hard part is to prove that $\mathcal{L}(V_L)\times\mathcal{L}(V_R)\subseteq Im\,\Phi_T$.
Let $(\alpha_L,\alpha_R)\in \mathcal{L}(V_L)\times\mathcal{L}(V_R)$.
From the triple $(T,\alpha_L,\alpha_R)$, we may build a set of points in the grid, denoted $A$, as follows:
we place a point (the root) at position $(0,0)$ and one point for every vertex $v$ in $T$ at coordinates
$$
\left\{
\begin{array}{l}
    X(v) = i  \hspace{.2 cm} \text{with}  \hspace{.2 cm} \alpha_L(i)=v \hspace{.4 cm} \text{and} \hspace{.4 cm} Y(v) = Y(\text{parent}(v))  \hspace{.6 cm} \text{ if }v\in V_L; \\
    X(v)=X(\text{parent}(v))  \hspace{.4 cm} \text{and} \hspace{.4 cm}Y(v)=j  \hspace{.2 cm} \text{with}  \hspace{.2 cm} \alpha_R(j)=v    \hspace{.6 cm} \text{ if }v\in V_R.
\end{array}
\right.
$$

We now prove that $A$ is a non-ambiguous tree, whose underlying binary tree is of course $T$. This will follow from the three following statements, that we prove below.
\begin{itemize}
\item[$i.$] for every left (resp. right) edge $(s,t)$ of $T$, we have $X(s) < X(t)$ (resp. $Y(s) < Y(t)$) in $A$;
\item[$ii.$] $A$ avoids the pattern $\pattern$;
\item[$iii.$] two different vertices in $T$ correspond to points at different positions in $A$.
\end{itemize}
\emph{Proof of $i.$} Without loss of generality, we show this property for the set of left edges. First of all, we define the \emph{oldest left (resp. right) ancestor} of a vertex $u$ to be the vertex $v$ such that the path going from $v$ to $u$ contains only right (resp. left) edges and this path is the longest with this property.\label{def:oldest_right_ancestor}

For example, in Figure~\ref{oldest_ancestor} the vertices named $v$ are the oldest left ancestors of those named $u$.

\begin{figure}[H]
$$
\begin{array}{c}
	\includegraphics[scale=0.8]{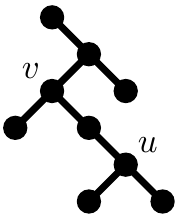}
\end{array}
\hspace{1cm}
\begin{array}{c}
	\includegraphics[scale=0.8]{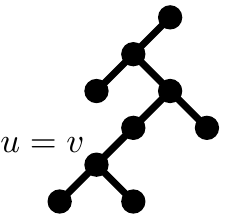}
\end{array}
$$
\caption{
The oldest left ancestors
\label{oldest_ancestor}
}
\end{figure}

Let $v$ be the oldest left ancestor of $s$ in $T$. Our construction of the set $A$ implies that $X(v)=X(s)$. Moreover, since $(s,t)$ is a left edge, we have
$\alpha_L(X(t))=t$. Keeping in mind these two facts, we consider the following two cases:
\begin{itemize}
\item if $v$ is the root of the tree, then $X(v)=0$. Since $X(t)\in[1,n]$, we have $0=X(s)<X(t)$;
\item otherwise, $v$ is the ending point of a left edge in $T$, and $\alpha_L(X(v))=v$. Since in $T$ there exists a path form $v$ to $t$, then $v<_{V_L} t$. Moreover, we know that $\alpha_L\in\mathcal{L}(V_L)$, so $v <_{\alpha_L} t$ and $X(v)<X(t)$, implying that $X(s)<X(t)$.
\end{itemize}

\noindent \emph{Proof of $ii.$} We proceed by way of contradiction. Suppose that there are three points $s,t,u\in A$ such that $s-t-u$ form the pattern $\begin{array}{c}\includegraphics[scale=.8]{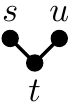}\end{array}$.
We remark that these three points are different from the root, because for each of them either the $X$ or the $Y$-coordinate is nonzero. Without loss of generality, we suppose that $t$ is the end of a left edge. Let $v$ be the oldest left ancestor of $s$. By construction, $Y(v)\leq Y(s)<Y(t)$, and hence $t\neq v$.
Since $X(v)=X(s)\neq 0$, 
$v$ is the end of a left edge.
The points $v$ and $t$ therefore both belong to $V_L$ and have the same $X$-coordinate, enventhough they are distinct.
This is impossible, since $\alpha_L$ is a linear extension of $V_L$, and hence all points in $V_L$ must have different $X$-coordinates.\\

\noindent \emph{Proof of $iii.$} We proceed by way of contradiction. Suppose that there are two different vertices $s,t\in T$ whose positions in $A$ coincide. By construction, they cannot be the root of the tree. We have two cases:
\begin{itemize}
\item one of the vertices is the end of a left edge, and the other is the end of a right edge. In this case, we find an occurrence of the forbidden pattern $\pattern$;
\item $s$ and $t$ are both the end of a left edge (the other case is similar). In this case, we have $s,t\in V_L$ and $X(s)=X(t)$ which is impossible.
\end{itemize}

\vspace{-21pt}
\end{proof}

Now we come to the final step toward proving Proposition~\ref{prop:hook}.
\begin{lemma}\label{lem:Hasse}
The Hasse diagrams of $V_L$ and $V_R$ are forests.
\end{lemma}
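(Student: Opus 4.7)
The plan is to show that in each of the posets $V_L$ and $V_R$, every element has at most one immediate predecessor (cover from below); this forces the Hasse diagram to be a forest, since the edges of the Hasse diagram are then exactly the pairs (element, its unique lower cover), giving a tree whose roots are the minimal elements of the poset.

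First I would fix $a \in V_L$ and describe explicitly the set $\{b \in V_L : b \leq a\}$. By definition of the order, $b \leq a$ means there is an oriented path in $T$ from $b$ to $a$, i.e., $b$ is an ancestor of $a$ in $T$ (or $b=a$). The ancestors of a vertex in a tree lie on the unique path from the root to that vertex, so they are totally ordered by the tree order. Restricting this chain to $V_L$ still gives a chain, hence $\{b \in V_L : b \leq a\}$ is totally ordered.

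It follows that among the strict predecessors of $a$ in $V_L$, there is at most one maximum, which is the unique cover of $a$ from below in the Hasse diagram of $V_L$ (if any cover exists at all; if the chain below $a$ is empty, $a$ is a minimal element and has no downward edge). The same argument applies verbatim to $V_R$, using that $V_R \cap \{\text{ancestors of }a\}$ is again a chain.

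Finally I would conclude that the Hasse diagram is a forest: orient each edge of the Hasse diagram from an element to its unique immediate predecessor; every non-minimal element has out-degree exactly one in this orientation, and minimal elements have out-degree zero. The underlying undirected graph of such an oriented graph is a disjoint union of trees rooted at the minimal elements, which is precisely a forest. There is no real obstacle here; the only point requiring a moment of care is the translation between ``each element has at most one cover from below'' and ``the Hasse diagram is a forest,'' which is resolved by the explicit rooting just described.
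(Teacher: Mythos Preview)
Your proof is correct and rests on the same underlying fact as the paper's---namely that the predecessors of any element in $V_L$ (resp.\ $V_R$) are ancestors in the tree $T$ and hence totally ordered. The only difference is presentational: the paper argues by contradiction (a cycle in the Hasse diagram would yield two distinct directed paths from some $v$ to some $w$, hence two paths in $T$, contradicting that $T$ is a tree), whereas you argue directly that each element has at most one lower cover; both are equally valid and essentially equivalent.
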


\begin{proof}
We prove this proposition by way of contradiction.
Suppose that there is a cycle in the Hasse diagram of $V_R$ (the case of $V_L$ is analogous).
We can deduce from the poset structure that there are two paths in $V_R$ starting from an element $v$ and ending at $w$.
This would imply that in the tree there are two different paths from $v$ to $w$, and hence there would be a cycle in the tree.
\end{proof}

Figure~\ref{hasse_diagram_v_r_v_l} shows an example of the forests obtained by computing the Hasse diagrams of $V_L$ and $V_R$.

\begin{figure}[H]
$$
T =
\begin{array}{c}
\includegraphics[scale=0.8]{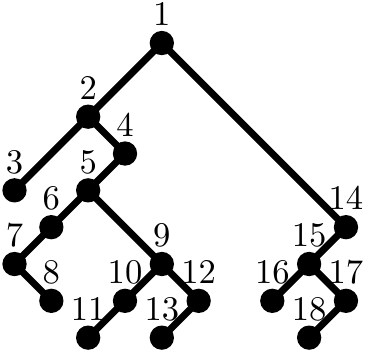}
\end{array}
\hspace{.4cm}
H(V_L) =
\begin{array}{c}
\includegraphics[scale=0.8]{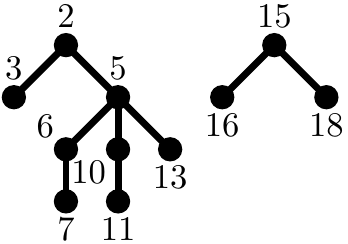}
\end{array}
\hspace{.4cm}
H(V_R) =
\begin{array}{c}
\includegraphics[scale=0.8]{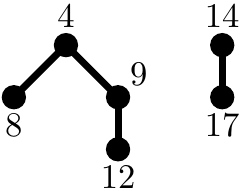}
\end{array}
$$
\vspace{-.5cm}
\caption{
The Hasse diagrams $H(V_L)$ and $H(V_R)$ of $V_L$ and $V_R$ are forests
\label{hasse_diagram_v_r_v_l}
}
\end{figure}

\begin{proposition}\label{prop:hook}
The number of non-ambiguous trees with underlying tree $T$ is given by
\begin{equation}\label{eqn:hook_formula}
NA(T)=\frac{\#\{\textrm{left edges}\}!\,\,\#\{\textrm{right edges}\}!}{\displaystyle{\prod_{e\in V_L} n_e}\,\,\displaystyle{\prod_{e\in V_R} n_e}}\
\end{equation}
where, for every left edge (resp. right edge) $e$, $n_e$ is the number of left edges (resp. right edges)
contained in the subtree whose root is the ending point of $e$, plus $1$.
\end{proposition}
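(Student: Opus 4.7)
The plan is to reduce the proposition to a classical hook-length formula for linear extensions of forest posets, using the two lemmas already in hand. By Lemma~\ref{lemma:crux}, counting non-ambiguous trees with underlying tree $T$ amounts to counting pairs $(\alpha_L,\alpha_R) \in \mathcal{L}(V_L) \times \mathcal{L}(V_R)$, so
\[
NA(T) \;=\; |\mathcal{L}(V_L)|\cdot|\mathcal{L}(V_R)|.
\]
By Lemma~\ref{lem:Hasse}, each of $V_L$ and $V_R$ is a forest. Therefore it suffices to count linear extensions of a forest poset, which is precisely what Knuth's classical hook-length formula for forests does: for a forest $F$ on $N$ vertices,
\[
|\mathcal{L}(F)| \;=\; \frac{N!}{\displaystyle\prod_{v\in F} h_v},
\]
where $h_v$ denotes the size of the subposet consisting of $v$ and all elements above it (its ``hook''). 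I would quote this formula from Knuth (\cite{knuth}, $\S 5.1.4$, already referenced in the introduction) rather than reprove it.

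The remaining task is then bookkeeping: identify, for each left edge $e=(s,v)$ of $T$, the hook value of the corresponding element $v$ in the forest $V_L$ with the quantity $n_e$ defined in the statement. By the definition of the order on $V_L$, the elements $w\in V_L$ with $w\geq v$ are exactly the endpoints of left edges lying in the subtree of $T$ rooted at $v$. Including $v$ itself (the endpoint of $e$), the size of this subposet equals the number of left edges contained in the subtree rooted at $v$, plus $1$, which is precisely $n_e$. The same argument applies to $V_R$. Multiplying the two hook formulas yields \pref{eqn:hook_formula}.

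I do not expect a genuine obstacle here: the real content has been packaged into Lemmas~\ref{lemma:crux} and \ref{lem:Hasse}, and the only step requiring care is the verification that the hooks in the forests $H(V_L)$ and $H(V_R)$ coincide with the quantities $n_e$ defined on the edges of $T$. This verification is a direct translation between two equivalent descriptions of the same subtree, and can be illustrated on the example of Figure~\ref{hasse_diagram_v_r_v_l}. One minor point worth stating explicitly in the write-up is that the Hasse diagrams are oriented with minima at the top (as in Figure~\ref{ex:posets}), so the forest roots in Knuth's formula correspond to the $V_L$-minima, which are the endpoints of the topmost left edges of $T$; this matches the subtree description used in the statement.
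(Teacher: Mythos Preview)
Your proposal is correct and follows essentially the same route as the paper: invoke Lemma~\ref{lemma:crux} to reduce to counting pairs of linear extensions, invoke Lemma~\ref{lem:Hasse} to ensure the posets are forests, apply Knuth's hook formula, and identify each hook in $V_L$ (resp.\ $V_R$) with the corresponding $n_e$. The extra remarks you make about orientation and the bookkeeping for $n_e$ are accurate and only make the argument more explicit than the paper's version.
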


\begin{proof}
Recall that Knuth's hook formula~\cite{knuth} gives the number of linear extensions of a poset $V$ whose Hasse diagram is a forest:
namely, the number of these linear extensions is $|V|!$ divided by the product of the hook lengths of all vertices in $V$.
Therefore, by Lemmas~\ref{lemma:crux} and~\ref{lem:Hasse}, the number of non-ambiguous trees with underlying tree $T$
is given by the product of the results of Knuth's hook formula
applied to the Hasse diagrams of $V_L$ and $V_R$.
Specifically, when applying Knuth's formula to $V_L$ (resp. $V_R$), the hook length of any vertex $v$ in $V_L$ (resp. $V_R$) is the number of descendants of $v$ in $V_L$ (resp. $V_R$) including $v$, which corresponds exactly to $n_e$ for the left (resp. right) edge $e$ whose end point is $v$.
\end{proof}

This new hook formula is illustrated by Figure~\ref{fig:hook}.
\begin{figure}[H]
$$
\hookformula{
\begin{array}{c}
\includegraphics[scale=1.0]{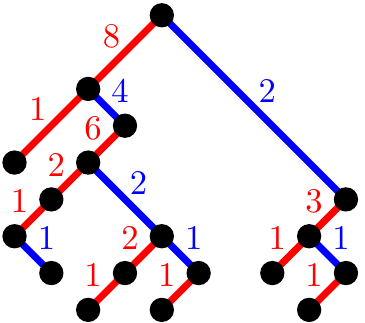}
\end{array}
} = \frac{ \textcolor{\leftedgecolor}{ 11! } \,\, \cdot \,\, \textcolor{\rightedgecolor}{ 6! } }{ \textcolor{\leftedgecolor}{ 1\cdot 1\cdot 1\cdot 1\cdot 1\cdot 1\cdot 2\cdot 2\cdot 3\cdot 6\cdot 8} \,\, \cdot \,\, \textcolor{\rightedgecolor}{ 1\cdot 1\cdot 1\cdot 2\cdot 2\cdot 4 }  }
$$
\vspace{-.5cm}
\caption{
A hook formula for non-ambiguous trees on an example
\label{fig:hook}
}
\end{figure}

%
%

\section{Complete non-ambiguous trees and combinatorial identities}
\label{enumeration_anac}

A non-ambiguous tree is \emph{complete} whenever its vertices have either $0$ or $2$ children.
An example of complete non-ambiguous tree can be found in Figure~\ref{root_suppression}.
A complete non-ambiguous tree always has an odd number of vertices. Moreover, as in complete binary trees, a complete non-ambiguous tree
with $2k+1$ vertices has exactly $k$ internal vertices, $k+1$ leaves, $k$ right edges and $k$ left edges.
Denote by $\nbanac{k}$ the number of complete non-ambiguous trees with $k$ internal vertices.
The sequence $(b_k)_{k\ge 0}$ is known in~\cite{oeis} as ${\tt A002190} = [1, 1, 4, 33, 456, 9460, \dots]$.
We give in this section the first combinatorial interpretation for this integer sequence,  in terms of complete non-ambiguous trees.
Moreover, we use this interpretation to give in Propositions~\ref{carl1} and~\ref{carl2}
combinatorial proofs of  two identities due to Carlitz~\cite{car}.

\subsection{Enumeration of complete non-ambiguous trees, and connection to Bessel function}

Denote by $\Catalan{n}$ the number of complete binary trees with $n$ internal vertices. It is well-known that $\Catalan{n}=\frac{1}{n+1}{2n \choose n}$ is the $n$-th Catalan number, and that, for every $n\geq 0$, we have the identity:
\begin{equation}
\Catalan{n+1} = \sum_{i+j=n} \Catalan{i}\Catalan{j}.
\label{cata}
\end{equation}

Proposition~\ref{carl1} gives a variant of this identity for complete non-ambiguous trees:

\begin{proposition}\label{carl1}For every $n\geq 0$, we have:
\begin{equation}
\nbanac{n+1} = \sum_{i+j=n} {n+1\choose i}{n+1 \choose j}\nbanac{i}\, \nbanac{j}.
\label{idII}
\end{equation}
\end{proposition}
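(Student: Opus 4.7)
The plan is to establish a bijection between complete non-ambiguous trees with $n+1$ internal vertices and certain quadruples of smaller data, by decomposing a tree $A$ through its root. Let $L$ (resp.\ $R$) be the subtree of $A$ rooted at the left (resp.\ right) child of the root. Since $A$ is complete, both $L$ and $R$ are themselves complete binary trees, with $i$ and $j$ internal vertices respectively, for some $i+j=n$.

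The crucial structural fact is that $L$ and $R$ use disjoint sets of $X$-coordinates and disjoint sets of $Y$-coordinates. To see this, observe that every non-root vertex is either a left child (introducing a new $X$-coordinate while keeping its parent's $Y$) or a right child (the symmetric situation). Hence the number of distinct $X$-coordinates (resp.\ $Y$-coordinates) appearing in any subtree equals one more than its number of left (resp.\ right) edges. In our setting $L$ uses exactly $i+1$ distinct values of each coordinate, $R$ uses $j+1$ of each, while condition~\ref{condition_3_ana} forces $A$ to occupy all of $\{0,1,\dots,n+1\}$ in both directions. Writing $S_L, S_R$ and $T_L, T_R$ for the respective coordinate sets, and noting that $X=0$ occurs only at the root and in $R$ (whose root $r$ has $X=0$), and $Y=0$ only at the root and in $L$, the equation $|S_L|+|S_R|=(i+1)+(j+1)=n+2=|S_L\cup S_R|$ forces $S_L\cap S_R=\emptyset$, and similarly $T_L\cap T_R=\emptyset$.

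This disjointness yields a bijection between complete non-ambiguous trees with $n+1$ internal vertices and quadruples $(L_0, R_0, S, T)$, where $L_0$ (resp.\ $R_0$) is a complete non-ambiguous tree with $i$ (resp.\ $j$) internal vertices, obtained from $L$ (resp.\ $R$) by order-preservingly relabeling its coordinates to $\{0,\dots,i\}$ (resp.\ $\{0,\dots,j\}$); $S\subseteq\{1,\dots,n+1\}$ is the set of $X$-coordinates used by $L$, of size $i+1$; and $T\subseteq\{1,\dots,n+1\}$ is the set of nonzero $Y$-coordinates used by $L$, of size $i$. Conversely, any such quadruple reconstructs a valid tree: place the root at $(0,0)$ and embed $L_0$ and $R_0$ via the unique order-preserving maps onto the prescribed coordinate sets. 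The forbidden pattern cannot arise --- within each of $L$ or $R$ by the non-ambiguity of $L_0$ and $R_0$, and across them because any offending configuration would require a shared $X$- or $Y$-coordinate, ruled out by disjointness --- and condition~\ref{condition_3_ana} holds by construction.

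Counting the quadruples gives $b_i\, b_j \binom{n+1}{i+1} \binom{n+1}{i}$; since $\binom{n+1}{i+1}=\binom{n+1}{j}$, summing over $i+j=n$ yields the identity~\pref{idII}. The main obstacle is the disjointness statement; once it is in hand, the remainder is routine bookkeeping, with only a moment's care needed for the pattern-avoidance verification under the reconstruction.
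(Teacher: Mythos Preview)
Your proof is correct and follows essentially the same approach as the paper's: both decompose a complete non-ambiguous tree with $n+1$ internal vertices at its root into left and right complete subtrees with $i$ and $j$ internal vertices, and both count the ways to merge them back by distributing the $X$- and $Y$-coordinates between the two pieces. The paper phrases this distribution as ``interlacing'' the right-edge (resp.\ left-edge) endpoints of $A_L$ with those of $A_R$ together with its root, obtaining the factors via a multichoose count $\left(\!\binom{j+2}{i}\!\right)=\binom{n+1}{i}$ and symmetrically $\binom{n+1}{j}$; you phrase it as choosing the subset $S\subseteq\{1,\dots,n+1\}$ of $X$-coordinates used by $L$ (size $i+1$) and the subset $T$ of nonzero $Y$-coordinates of $L$ (size $i$), which is the same data. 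If anything, your version is slightly more explicit: you isolate and prove the disjointness of the coordinate sets (which the paper leaves implicit) and you verify that the reconstruction avoids the forbidden pattern, whereas the paper's argument is more informal on these points.
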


\begin{proof}
The proof of this proposition is similar to the classical proof of~\pref{cata}: the left (resp. right) subtree $A_L$ (resp. $A_R$) of a complete non-ambiguous tree $A$ with $n+1$ internal vertices is a complete non-ambiguous tree with $i$ (resp. $j$) internal vertices, where $i+j=n$.

Figure~\ref{root_suppression} shows an example of left and right subtree of a complete non-ambiguous tree.
\begin{figure}[H]
$$
A =
\begin{array}{c}
\includegraphics[scale=0.8]{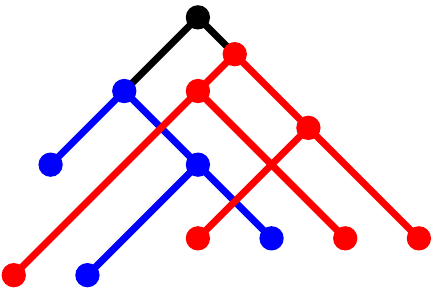}
\end{array}
\hspace{.2cm}
\longrightarrow
\hspace{.2cm}
A_L =
\begin{array}{c}
\includegraphics[scale=0.8]{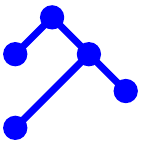}
\end{array}
\hspace{.4cm}
A_R =
\begin{array}{c}
\includegraphics[scale=0.8]{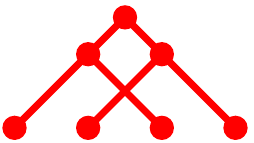}
\end{array}
$$
\vspace{-.5cm}
\caption{
The root suppression in a complete non-ambiguous tree
\label{root_suppression}
}
\end{figure}

Hence, in order to construct an arbitrary complete non-ambiguous tree $A$ with $n+1$ internal vertices, we need to choose:
\begin{itemize}
\item the number $i$ of internal vertices contained in $A_L$ ($i$ may range between $0$ and $n$, the number $j$ is equal to $n-i$);
\item the complete non-ambiguous tree structure of $A_L$ (resp. $A_R$) -- we have $\nbanac{i}$ (resp. $\nbanac{j}$) choices;
\item the way of interlacing the right (resp. left) edges of $A_L$ and $A_R$.
\end{itemize}
We denote by $u_1,u_2,\ldots,u_i$ (resp. $v_1,v_2,\ldots,v_{j}$) the end points of the right edges in $A_L$ (resp. $A_R$) such that if $k<l$, then $Y(u_k)<Y(u_l)$ (resp. $Y(v_k)<Y(v_l)$), and by $u_0$ and $v_0$ the roots of $A_L$ and $A_R$. Now, if we want to interlace the right edges in $A_L$ with those in $A_R$, we need to decide at what positions we want to insert the vertices $u_1,u_2,\ldots,u_i$ with respect to $v_0,v_1,v_2,\ldots,v_{j}$, saving the relative order among $u_0,u_1,u_2,\ldots,u_i$ and $v_0,v_1,v_2,\ldots,v_{j}$. A vertex $u_k$ can be placed either to the left of $v_0$, or between $v_t$ and $v_{t+1}$ ($0\leq t\leq j-1$), or to the right of $v_{j}$.

Hence, we must choose the $i$ positions of $u_1,u_2,\ldots,u_i$ (multiple choices of the same position are allowed) among $j+2$ possible ones. This shows that there are
$\left(\left({j+2\atop i}\right)\right)={i+j+1 \choose i}={n+1 \choose i}$
ways of interlacing the right edges of the subtrees $A_L$ and $A_R$, where $\left(\left({a\atop b}\right)\right)$ denotes the number of way of choosing $b$ objects within $a$, with possible repetitions.

\noindent Analogous arguments apply to left edges. In this case, we have
$\left(\left({i+2\atop j}\right)\right)={n+1 \choose j}$
different interlacements. This ends the proof.
\end{proof}

\begin{corollary}\label{corollarydeux}
The sequence $\nbanac{k}$ satisfies the following identity
\begin{equation}
\sum_{k\geq 0}\nbanac{k}\frac{x^{2(k+1)}}{((k+1)! 2^{k+1})^2} = - \ln \left( \Bessel{0}{x} \right).
\label{bessel}
\end{equation}
\end{corollary}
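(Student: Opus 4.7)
The plan is to convert the identity~\pref{bessel} into a differential equation argument. Let $F(x)$ denote the left-hand side of~\pref{bessel} and $L(x) = -\logarithm{\Bessel{0}{x}}$. Both are even formal power series vanishing at $x=0$, with matching coefficient $1/4$ of $x^2$ (from $\nbanac{0} = 1$ on one side, and $\Bessel{0}{x} = 1 - x^2/4 + O(x^4)$ on the other); the goal is to show $F=L$.

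First, I would derive the ODE satisfied by $L$. Dividing the Bessel equation $\Bessel{0}{x}'' + \Bessel{0}{x}'/x + \Bessel{0}{x} = 0$ by $\Bessel{0}{x}$ and substituting $L' = -\Bessel{0}{x}'/\Bessel{0}{x}$ together with $L'' = -\Bessel{0}{x}''/\Bessel{0}{x} + (L')^2$ yields the Riccati-type equation
\[
L''(x) + \frac{L'(x)}{x} - (L'(x))^2 = 1. \qquad (\star)
\]

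Second, I would verify that $F$ also satisfies $(\star)$, using Proposition~\ref{carl1}. Writing $F(x) = \sum_{k\ge 0} c_k\, x^{2k+2}$ with $c_k = \nbanac{k}/((k+1)!\,2^{k+1})^2$, a direct computation gives
\[
F''(x) + \frac{F'(x)}{x} = \sum_{k\ge 0} \frac{\nbanac{k}}{(k!\,2^{k})^2}\, x^{2k}
\]
(whose constant term is $1$ since $\nbanac{0}=1$) and
\[
(F'(x))^2 = \sum_{n\ge 1} \frac{x^{2n}}{4^n} \sum_{i+j=n-1} \frac{\nbanac{i}\nbanac{j}}{i!\,(i+1)!\,j!\,(j+1)!}.
\]
Equating coefficients of $x^{2n}$ ($n\ge 1$) in $(\star)$ then reduces, using the identity $(n!)^2 / (i!\,(i+1)!\,j!\,(j+1)!) = \binom{n}{i}\binom{n}{j}$ valid for $i+j=n-1$, to
\[
\nbanac{n} = \sum_{i+j=n-1} \binom{n}{i}\binom{n}{j}\, \nbanac{i}\,\nbanac{j},
\]
which is exactly the recurrence of Proposition~\ref{carl1} after the shift of index $n+1 \leftarrow n$.

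Finally, uniqueness of formal power series solutions of $(\star)$ concludes the argument: in the expansion of $(\star)$, the coefficient of $x^{2n}$ on the left-hand side involves $c_n$ linearly with nonzero coefficient $(2n+2)^2$ and otherwise only $c_0,\ldots,c_{n-1}$, so any even solution $\sum_k c_k x^{2k+2}$ is pinned down by its initial data. Since $F$ and $L$ satisfy $(\star)$ with the same first coefficient, they must agree. The main technical obstacle is matching the ODE coefficient identity with the recurrence of Proposition~\ref{carl1}: this is routine but delicate bookkeeping of factors $2^k$ and $4^k$, reflecting the substitution $x \mapsto x/2$ hidden in the normalization of $\Bessel{0}{x}$.
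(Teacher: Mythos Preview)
Your proof is correct and follows essentially the same approach as the paper: both arguments reduce the identity to the recurrence of Proposition~\ref{carl1} via the Bessel differential equation and a check of initial conditions. The only cosmetic difference is that the paper exponentiates and shows that $B(x)=\exp(-F(x))$ satisfies the \emph{linear} Bessel equation~\pref{equadiff}, whereas you work directly with $F$ and the equivalent Riccati equation $(\star)$ obtained from~\pref{equadiff} by the substitution $y=e^{-F}$; the coefficient bookkeeping you spell out is exactly what the paper hides behind the sentence ``Equation~\pref{idII} ensures that $B(x)$ satisfies Equation~\pref{equadiff}''.
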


\begin{proof}
It is well known (see, e.g.,~\cite{abst64}) that the Bessel function $\Bessel{0}{x}=\displaystyle{\sum_{k\geq 0}j_k x^k}$ satisfies the differential equation
\begin{equation}
\frac{d^2 y}{dx^2} + \frac{1}{x} \frac{d \, y}{dx} + y = 0,
\label{equadiff}\end{equation}
The first coefficients in its series expansion are $j_0=1$ and $j_1=0$.

Consider now the function $
B(x) = \displaystyle{\Exponential{ - \sum_{k\geq 0}\nbanac{k}\frac{x^{2(k+1)}}{((k+1)! 2^{k+1})^2}}}=\displaystyle{\sum_{k\geq 0} \beta_k x^k}$.
Equation~\pref{idII} ensures that $B(x)$ satisfies Equation~\pref{equadiff}, \emph{i.e.} the same second order differential equation as $\Bessel{0}{x}$.

Setting $x=0$, we have $\beta_0=B(0)=1=j_0$.
Moreover, in $Z(x)= - \displaystyle{\sum_{k\geq 0}\nbanac{k}\frac{x^{2(k+1)}}{((k+1)! 2^{k+1})^2}}$ only the even powers of $x$ have non-zero coefficients. Hence, since $B(x)=\Exponential{Z(x)}=\displaystyle{\sum_{k\geq 0}\frac{Z(x)^k}{k!}}$, we have $\beta_{2i+1}=0$ for every $i\geq 0$. In particular, $\beta_1=0=j_1$.
These arguments imply that $B(x)=\Bessel{0}{x}$.
\end{proof}

%
%
%

\subsection{Combinatorial identities}

\noindent  Corollary~\ref{corollarydeux} shows that non-ambiguous trees provide
 a combinatorial interpretation -- and to our knowledge, the first one -- of sequence {\tt A002190}~\cite{oeis}.

\noindent In~\cite{car}, the author shows analytically that  identities (\ref{idII}) and~\pref{idIII} below are equivalent.
We give a combinatorial proof of this fact.

\begin{proposition}\label{carl2} For every $n\geq 1$, we have:
\begin{equation}
\sum_{k=0}^{n-1}(-1)^k{n\choose k+1}{n-1\choose k}\nbanac{k}=1.\label{idIII}\end{equation}
\end{proposition}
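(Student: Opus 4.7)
The plan is to deduce \pref{idIII} from the Bessel function identity \pref{bessel} of Corollary~\ref{corollarydeux}, which was itself obtained from \pref{idII}. This provides a generating-function bridge between \pref{idII} and \pref{idIII}. Denote by $L(x)=\sum_{k\ge 0}b_k\,x^{2(k+1)}/((k+1)!\,2^{k+1})^2$ the left-hand side of \pref{bessel}, so that $\exponential{-L(x)}=\Bessel{0}{x}$. First I would differentiate this exponential relation with respect to $x$ and use the classical Bessel identity $J_0'=-J_1$ to obtain the compact equation
$$ L'(x)\,\Bessel{0}{x}=\Bessel{1}{x}. $$

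The second step is to extract the coefficient of $x^{2n-1}$ from both sides. Using the derivative $L'(x)=\sum_{k\ge 0}b_k\,x^{2k+1}/((k+1)!\,k!\,2^{2k+1})$ together with the standard power-series expansions of $\Bessel{0}{x}$ and $\Bessel{1}{x}$, the Cauchy product on the left combined with the single coefficient on the right yields, after cancelling a common factor of $2^{2n-1}$,
$$ \sum_{k=0}^{n-1}\frac{(-1)^{n-1-k}\,b_k}{(k+1)!\,k!\,((n-1-k)!)^2}=\frac{(-1)^{n-1}}{(n-1)!\,n!}. $$
Multiplying through by $(-1)^{n-1}\,n!\,(n-1)!$, using $(-1)^{n-1-k}(-1)^{n-1}=(-1)^k$, and applying the factorial identity
$$ \binom{n}{k+1}\binom{n-1}{k}=\frac{n!\,(n-1)!}{(k+1)!\,k!\,((n-1-k)!)^2} $$
recovers \pref{idIII} precisely.

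The only real obstacle is the bookkeeping of signs, factorials and powers of~$2$ in the Cauchy product; once Corollary~\ref{corollarydeux} is in hand, the entire proof consists of one differentiation and a coefficient comparison. A more directly bijective approach, giving a combinatorial explanation of \pref{idIII} in its own right, would proceed by viewing $\binom{n}{k+1}\binom{n-1}{k}b_k$ as the number of complete non-ambiguous trees with $k$ internal vertices equipped with two external labellings (say, $k+1$ row labels among $[n]$ and $k$ column labels among $[n-1]$), and then constructing a sign-reversing involution on the union of these sets whose unique fixed point corresponds to the~$1$ on the right-hand side. Producing such an involution explicitly is the main combinatorial challenge and would give the most satisfying explanation of the identity, but it is not strictly necessary for a valid proof.
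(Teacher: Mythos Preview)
Your generating-function argument is correct: differentiating $\exponential{-L(x)}=\Bessel{0}{x}$ gives $L'(x)\Bessel{0}{x}=\Bessel{1}{x}$, and extracting the coefficient of $x^{2n-1}$ reproduces~\pref{idIII} exactly as you wrote. The bookkeeping with powers of $2$ and factorials checks out.

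However, this route is genuinely different from the paper's. The paper explicitly notes that Carlitz already established the analytic equivalence of~\pref{idII} and~\pref{idIII}, and that its contribution is a \emph{combinatorial} proof of~\pref{idIII}. That proof is precisely the sign-reversing involution you sketch in your final paragraph. Concretely, the paper interprets $\binom{n}{k+1}\binom{n-1}{k}b_k$ as the number of \emph{gridded trees}: complete non-ambiguous trees with $k$ internal vertices placed in an $n\times n$ grid with the root in the first column (so $\binom{n}{k+1}$ chooses the occupied rows and $\binom{n-1}{k}$ the remaining occupied columns). The involution adjoins a virtual root, follows the unique ``turning'' path down to a leaf, and either deletes that leaf together with its parent (if the path crossed no empty row or column) or inserts a new internal vertex and a new leaf at the first empty line encountered. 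This pairs gridded trees of sizes $(k,n)$ and $(k\pm 1,n)$, leaving only the trivial single-vertex tree as a fixed point, which accounts for the $1$ on the right-hand side.

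So your proof is valid but recovers the known analytic derivation; the paper's value lies in making the identity bijective, which is exactly what you flagged as ``the main combinatorial challenge'' without carrying it out.
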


\begin{proof}
We fix an integer $n$ and we take $0\leq k\leq n-1$.
We define a gridded tree of size $(k,n)$ to be a set of $2k+1$ points placed in a $n\times n$ grid, such that Condition~\ref{condition_2_ana} defining non-ambiguous trees is satisfied (which means we consider a non-ambiguous tree of size $2k+1$ embedded in a $n\times n$ grid) and such that the underlying tree is complete and that its root belongs to the first column.
This implies that there are $n-k-1$ empty columns and $n-k-1$ empty rows, and that the first column is not empty.
Figure~\ref{danslagrille} shows an example of a gridded tree of size $(2,6)$.
\begin{figure}[ht]
\begin{center}
\vspace{-.5cm}
$$
\begin{array}{c}
\includegraphics[scale=0.7]{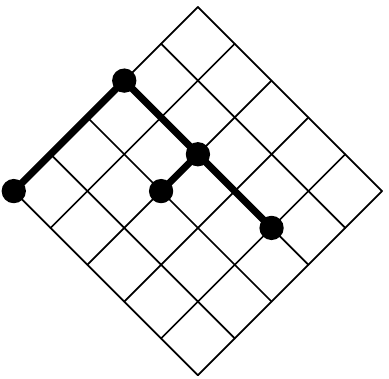}
\end{array}
$$
\end{center}
\vspace{-.5cm}
\caption{
An example of gridded tree with $2$ internal vertices drawn on a $6\times 6$ grid
\label{danslagrille}
}
\end{figure}
%

\noindent It is easy to verify that there are ${n\choose k+1}{n-1\choose k}b_k$ gridded trees of size $(k,n)$. We call trivial gridded tree the tree of size $(0,n)$ consisting of a single vertex in $(0,0)$.
Now, for every integer $n$, we define an involution on the set of non trivial gridded trees. This involution associates a gridded tree of size $(k,n)$ with a gridded tree either of size $(k-1,n)$ or $(k+1,n)$.

\noindent To define this involution, consider a gridded tree of size $(k,n)$ and add a virtual root at position $(-1,0)$; the previous root becomes the left child of the virtual root. Now consider the path starting from the virtual root, going down through the tree, turning at each internal vertex, and ending at a leaf. This path is unique. There are two cases:
\begin{enumerate}
\item \label{involution:erase_leaf} the path does not cross an empty row, nor an empty column: we erase the leaf and its parent from the tree, getting a new gridded tree of size $(k-1,n)$. We can always erase the leaf and its parent, except if the parent is the virtual root. This happens only if the tree is the trivial gridded tree. As we restricted to non trivial gridded trees, this case never happens.
\item \label{involution:add_leaf} the path crosses an empty row or an empty column: we choose the first empty row or column met while visiting the path. Without loss of generality, we suppose that it is a column, say $c$. Then, we add a new vertex $v$ at the position where $c$ crosses the path, and we add in the same column a new leaf (whose parent is $v$) in the topmost empty row. While visiting the path, we did not meet an empty row. Since there are as many empty rows as empty columns, there is always an empty row below $v$. This operation gives rise to a new gridded tree of size $(k+1,n)$.
\end{enumerate}
Remark that adding (resp. removing) a leaf and its parent $p$ in (resp. from) a gridded tree following the previous algorithm does not remove (resp. add) any empty row or column that crosses the path from the virtual root to $p$. For this reason, this operation is an involution. Figure~\ref{involution_example} shows how the involution acts on two examples.
\end{proof}

\begin{figure}[H]
$$
\begin{array}{c}
\includegraphics[scale=0.7]{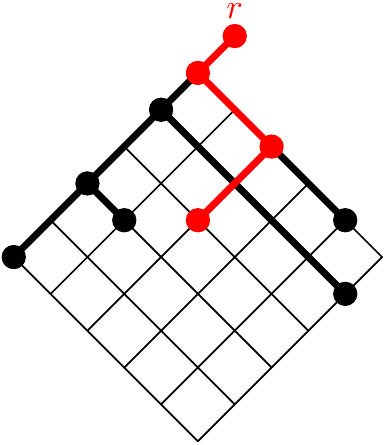}
\end{array}
\begin{array}{c}
\xrightarrow{\hspace{.25cm}\ref{involution:erase_leaf}.\hspace{.25cm}} \\
\xleftarrow[\hspace{.25cm}\ref{involution:add_leaf}.\hspace{.25cm}]{}
\end{array}
\begin{array}{c}
\includegraphics[scale=0.7]{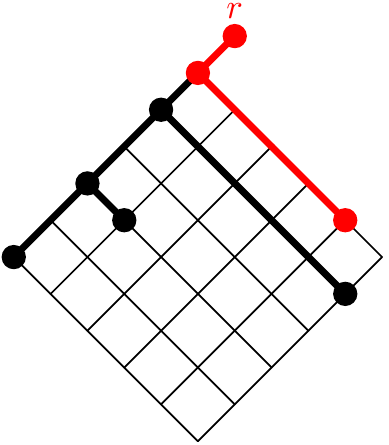}
\end{array} \qquad \begin{array}{c}
\includegraphics[scale=0.7]{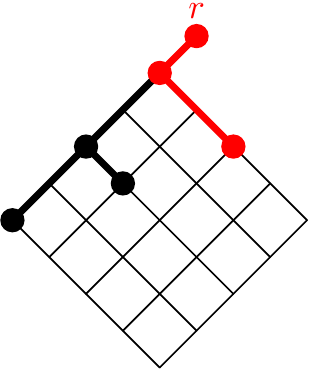}
\end{array}
\begin{array}{c}
\xrightarrow{\hspace{.25cm}\ref{involution:erase_leaf}.\hspace{.25cm}} \\
\xleftarrow[\hspace{.25cm}\ref{involution:add_leaf}.\hspace{.25cm}]{}
\end{array}
\begin{array}{c}
\includegraphics[scale=0.7]{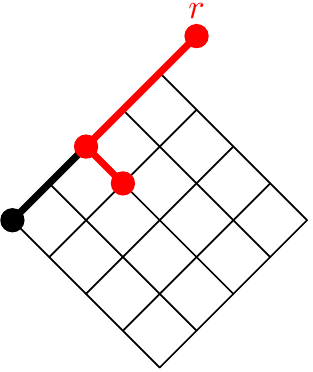}
\end{array}
$$
\vspace{-.5cm}
\caption{
The involution acting on two examples of non trivial gridded trees
\label{involution_example}
}
\end{figure}
%
%

We now state and prove an identity satisfied by Catalan numbers,
whose proof is similar to Proposition \ref{carl2},
and which will be used to get Corollary \ref{identity_ANAC}.

\begin{proposition}\label{identity_Catalan} For every $n\geq 1$, we have:
\begin{equation}
\sum_{k=0}^{n}(-1)^{n+k}{{n+k}\choose {n-k}}  \Catalan{k}=0.\label{id_Catalan}\end{equation}
\end{proposition}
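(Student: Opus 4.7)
My plan is to mimic the sign-reversing involution used in Proposition~\ref{carl2}, but now on generalized Dyck paths. The first step is to give a convenient combinatorial interpretation of the terms. Writing $\binom{n+k}{n-k}=\binom{n+k}{2k}$, the product $\binom{n+k}{n-k}\Catalan{k}$ counts paths of length $n+k$ using up-steps U, down-steps D and horizontal steps H, with exactly $k$ up-steps, $k$ down-steps, $n-k$ horizontal steps, staying at nonnegative height and ending at height $0$: one chooses the $2k$ non-horizontal positions among $n+k$, then fills them with a Dyck path. Under this interpretation the sign $(-1)^{n+k}$ is the parity of the length, so it flips precisely when $k$ changes by one.

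The heart of the argument is then to build an involution $\sigma$ on the union of these families for $k=0,\dots,n$ that changes $k$ by $\pm 1$ and hence reverses the sign. Given a path $P$, call its \emph{leftmost trigger} either the leftmost H step, or the leftmost peak UD (an up-step immediately followed by a down-step), whichever occurs earlier. If the trigger is an H at position $i$, define $\sigma(P)$ by \emph{expanding} it: replace this H by a UD, lengthening the path by one and incrementing $k$. If the trigger is a UD peak at positions $i,i+1$, define $\sigma(P)$ by \emph{collapsing} it to a single H at position $i$, shortening the path by one and decrementing $k$. In both cases the result is still a valid generalized Dyck path, because H and UD have the same net height effect and the same minimal local height.

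The delicate part, on which I expect to spend the most care, is verifying that $\sigma$ is a genuine involution. The key observation is that if the leftmost trigger of $P$ lies at position $i$, then the prefix $s_1\cdots s_{i-1}$ contains no H and no UD pair; as it is a nonnegative path starting at $0$, it must be a pure run of U's. When we expand an H, the freshly written letters are a U at position $i$ and a D at position $i+1$: the pair $s_{i-1}\,U$ cannot be UD (its second letter is U), so no new trigger appears strictly before position $i$, and the UD at positions $i,i+1$ is the new leftmost trigger. The collapse case is symmetric: the new H at position $i$ is itself a trigger, while the pair $s_{i-1}\,H$ cannot be UD. This ensures $\sigma\circ\sigma=\mathrm{id}$.

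It only remains to check that $\sigma$ has no fixed points. A fixed point would be a path without H steps and without UD peaks, i.e., a non-empty Dyck path without peaks; but any non-empty Dyck path contains a peak (the first return to the $x$-axis consists of a down-step preceded by an up-step). Since for $n\geq 1$ every path in the sum has length $n+k\geq 1$, $\sigma$ pairs all the objects into sign-reversing pairs, whence $\sum_{k=0}^{n}(-1)^{n+k}\binom{n+k}{n-k}\Catalan{k}=0$.
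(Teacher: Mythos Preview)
Your proof is correct. The combinatorial interpretation via Motzkin paths of length $n+k$ with exactly $k$ up-steps is valid (horizontal steps do not affect height, so nonnegativity reduces to the Dyck condition on the $2k$ non-horizontal letters), and the expand/collapse map on the leftmost trigger is a well-defined sign-reversing involution without fixed points for $n\ge 1$. One small phrasing quibble: the parenthetical ``the first return to the $x$-axis consists of a down-step preceded by an up-step'' is not quite the right witness (in $UUDD$ the step before the first return is a $D$); the clean argument is the one you actually use elsewhere, namely that the \emph{first} $D$ in any nonempty Dyck word is necessarily preceded by a $U$.

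The paper's proof is also a sign-reversing involution changing $k$ by $\pm 1$, but it is set up on a different model: it interprets $\binom{n+k}{n-k}\Catalan{k}$ as complete binary trees with $k$ internal vertices whose $2k+1$ vertices carry nonnegative integer labels summing to $n-k$, and the involution follows the ``turning path'' from a virtual root, either deleting the terminal leaf and its parent (and incrementing the sibling's label) when all labels along the path vanish, or inserting a new internal vertex and leaf at the first nonzero label (and decrementing that label). The two constructions are cousins under the usual Dyck/tree correspondence---your horizontal steps play the role of the label units, and the ``leftmost trigger'' is the lattice-path avatar of the paper's ``first nonzero label / end of turning path'' dichotomy---but they are not literally the same involution. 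Your version has the advantage of being entirely self-contained on paths, with an especially transparent check that $\sigma^2=\mathrm{id}$; the paper's version has the advantage of reusing verbatim the turning-path mechanism already introduced for Proposition~\ref{carl2}, so no new machinery is needed in context.
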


\begin{proof}
We define a {\em $(k,\ell)$-labeled binary tree}  as a complete binary tree with $k$ internal vertices
together with a labeling of its $2k+1$ (internal or external) vertices by non-negative integers,
such that the sum of these labels is equal to $\ell$.
Since ${{n+k}\choose {n-k}}$ is the number of ways to choose $n-k$ elements with repetition
in a set of cardinality $2k+1$,
we may interpret ${{n+k}\choose {n-k}}  \Catalan{k}$ combinatorially as the number of $(k,n-k)$-labeled binary trees.
Indeed, a $(k,n-k)$-labeled binary tree may be described by a complete binary tree with $k$ internal vertices,
together with a choice of $n-k$ vertices (with possible repetition) among its $2k+1$ (internal or external) vertices,
any vertex receiving label $i$ when it has been chosen $i$ times.

Now the proof is based on an involution defined on $(k,n-k)$-labeled binary trees, for $0\le k\le n$,
which modifies the parameter $k$ by $\pm 1$.
This involution is defined  in a way similar to the one used in the proof of Proposition \ref{carl2}.
Let us consider a $(k,n-k)$-labeled binary tree.
We add to it a virtual root  and consider the path starting from the virtual root, going down through the tree,
turning at each internal vertex, and ending at a leaf.
Two cases occur:
\begin{enumerate}
\item \label{involution2:erase_leaf}
If all the vertices on this path are labeled by zero, then we erase the leaf $y$ at the end of this path,
together with its parent, and we increase by one the label of the sibling of $y$.
This produces a $(k-1,n-k+1)$-labeled binary tree.
\item \label{involution2:add_leaf}
Otherwise we consider the first time this path encounters a vertex $v$ that has a non-zero label.
We decrease by one the label of $v$ and we add a new vertex $x$ on the edge between $v$ and its parent,
having two children: $v$ and a new leaf $y$ (the new sibling of $v$).
The labels associated to the new vertices $x$ and $y$ are zero,
so that the tree obtained is a $(k+1,n-k-1)$-labeled binary tree.
\end{enumerate}
Figure~\ref{involution2_example} illustrates this operation on two examples (labels zero are omitted on this figure).
It is clear that we define in this way an involution that modifies the parameter $k$ by $\pm 1$, which implies~\eqref{id_Catalan}.
\end{proof}

\begin{figure}[H]
$$
\begin{array}{c}
\includegraphics[scale=0.7]{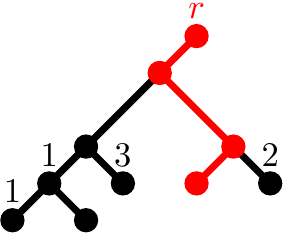}
\end{array}
\begin{array}{c}
\xrightarrow{\hspace{.25cm}\ref{involution:erase_leaf}.\hspace{.25cm}} \\
\xleftarrow[\hspace{.25cm}\ref{involution:add_leaf}.\hspace{.25cm}]{}
\end{array}
\begin{array}{c}
\includegraphics[scale=0.7]{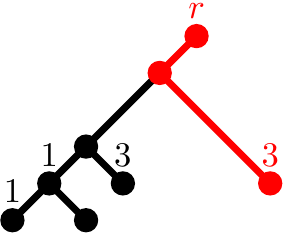}
\end{array} \qquad \begin{array}{c}
\includegraphics[scale=0.7]{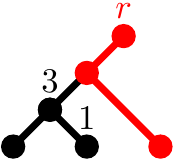}
\end{array}
\begin{array}{c}
\xrightarrow{\hspace{.25cm}\ref{involution:erase_leaf}.\hspace{.25cm}} \\
\xleftarrow[\hspace{.25cm}\ref{involution:add_leaf}.\hspace{.25cm}]{}
\end{array}
\begin{array}{c}
\includegraphics[scale=0.7]{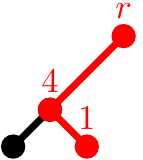}
\end{array}
$$
\vspace{-.5cm}
\caption{
The involution acting on two examples of $(4,7)$-labeled trees
\label{involution2_example}
}
\end{figure}

We are now able to obtain a new identity involving Catalan numbers and the sequence $\nbanac{k}$,
which has been conjectured by P. Hanna (see \cite{oeis}, sequence {\tt A002190}).

\begin{corollary}\label{identity_ANAC} For every $n\geq 1$, we have:
\begin{equation}
\sum_{k=0}^n(-1)^k \nbanac{k}\Catalan{k}{n+k\choose n-k}^2=0.\label{id_ANAC}\end{equation}
\end{corollary}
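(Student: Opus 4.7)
The plan is to adapt the involution arguments of Propositions~\ref{carl2} and~\ref{identity_Catalan} to a richer class of objects that naturally carries all three factors in the summand. Fix $n\ge 1$ and, for each $k\in\{0,\dots,n\}$, consider quadruples $(A,T,L_1,L_2)$ where $A$ is a complete non-ambiguous tree with $k$ internal vertices (contributing $\nbanac{k}$), $T$ is a complete binary tree with $k$ internal vertices (contributing $\Catalan{k}$), and $L_1,L_2$ are two labelings of the $2k+1$ vertices of $T$ by non-negative integers summing to $n-k$ (contributing ${n+k\choose n-k}^2$). Weighting such a quadruple by $(-1)^k$ produces exactly the summand of the LHS of~\pref{id_ANAC}; in particular, for $n=0$ only the trivial quadruple ($k=0$) contributes $+1$, accounting for the RHS.

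The core step is to build a sign-reversing involution $\Phi$ on these quadruples with no fixed points when $n\ge 1$. On the labeled binary tree $(T,L_1)$ I would apply the involution of Proposition~\ref{identity_Catalan}: trace the path from a virtual root that turns at each internal vertex; if every label along it is zero, delete the final leaf together with its parent while incrementing its sibling's label; otherwise decrement the first non-zero label on the path and insert a new leaf-parent pair at that vertex. This shifts the number of internal vertices of $T$ by $\pm 1$ while preserving $n$. Synchronously, $A$ must gain or lose one internal vertex to keep $|A|=|T|$, and I would do so via the involution of Proposition~\ref{carl2} applied to a ``gridded'' version of $A$, using the second labeling $L_2$ to encode the positions and widths of the empty rows and columns. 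The joint map $\Phi$ flips $(-1)^k$ and is an involution because each of its two components is.

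The main obstacle I expect is making this coupling precise. The enumerative counts on the two sides do not coincide naively: gridded trees of Proposition~\ref{carl2} are counted by ${n\choose k+1}{n-1\choose k}\nbanac{k}$, whereas the quantity attached to $A$ in our objects is $\nbanac{k}{n+k\choose n-k}$, so part of the work is to find the right reinterpretation of $L_2$ as grid data on $A$ such that (i)~the number of such ``grid decorations'' is exactly ${n+k\choose n-k}$, and (ii)~the path-based moves dictated by the two propositions can be carried out in lockstep so that $\Phi^2=\mathrm{id}$. If this direct combinatorial coupling turns out to be too delicate, a fallback is to derive~\pref{id_ANAC} algebraically: \pref{id_Catalan} identifies $((-1)^k\Catalan{k})_k$ with the first column of the inverse of the Riordan array $\bigl(\tfrac{1}{1-x},\tfrac{x}{(1-x)^2}\bigr)$, and \pref{id_ANAC} can then be recast as an identity of Riordan arrays obtained by twisting this inversion with the generating-function information of Propositions~\ref{carl1} and~\ref{carl2} (in particular Corollary~\ref{corollarydeux}, relating $\nbanac{k}$ to $J_0$).
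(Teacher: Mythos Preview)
Your proposal has a genuine gap at the synchronization step. You write that ``the joint map $\Phi$ flips $(-1)^k$ and is an involution because each of its two components is,'' but this does not follow: the direction of the move ($k\to k+1$ versus $k\to k-1$) in the involution of Proposition~\ref{identity_Catalan} is dictated by whether the turning path in $(T,L_1)$ meets a nonzero label, whereas the direction in the involution of Proposition~\ref{carl2} is dictated by whether the turning path in the gridded version of $A$ meets an empty row or column. These two conditions are independent, so on a generic quadruple the two components will want to move $k$ in opposite directions and no joint map exists. You seem to intend $(T,L_1)$ to lead and $A$ to follow, but Proposition~\ref{carl2} does not give you an on-demand insertion or deletion on $A$: it is an involution, not a pair of mutually inverse size-changing maps that you may apply at will. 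Making this work would require a genuinely new construction on the $A$-side, which you have not supplied; the mismatch you already noticed between ${n\choose k+1}{n-1\choose k}$ and ${n+k\choose n-k}$ is another symptom of the same obstruction.

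For comparison, the paper does not attempt a bijection at all. It encodes identity~\pref{idIII} as a lower-triangular matrix equation $A(\myn)\cdot{\bf\underline{b}}(\myn)={\bf\underline{1}}(\myn)$ and the target~\pref{id_ANAC} as $D(\myn)\cdot{\bf\underline{b}}(\myn)={\bf\underline{0}}(\myn)$, so that it suffices to show $H(\myn)\cdot{\bf\underline{1}}(\myn)={\bf\underline{0}}(\myn)$ for $H(\myn)=D(\myn)\cdot A(\myn)^{-1}$. The paper then proves the closed form $H(\myn)_{i,j}=(-1)^{i+j}{i+j\choose i-j}\Catalan{j}$ (for $i\ge1$) by verifying the equivalent hypergeometric identity $\sum_{k}(-1)^{i+k}{i+k\choose i-k}{k+1\choose j+1}{k\choose j}\Catalan{k}={i+j\choose i-j}^2\Catalan{j}$ via the WZ method; the vanishing of the row sums of $H(\myn)$ is then precisely Proposition~\ref{identity_Catalan}. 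Your Riordan-array fallback is in this spirit, but it too would need a concrete computation at the level of this last identity to go through.
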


\begin{proof}
We fix an integer $\myn \geq 1$ and define the matrix $D(\myn)=(D(\myn)_{i,j})_{0\leq i,j\leq \myn}$ as follows: 
$$D(\myn)_{i,j}=\left\{\begin{array}{ll}0&\text{if }i=0\\(-1)^{j}{i+j\choose i-j}^2C_j&\text{otherwise}\end{array}\right. \text{.}$$
We also set ${\bf \underline{b}}(\myn)$ to be the column vector whose entries are $(b_0,b_1,\ldots,b_{\myn})$.
With this notations, Identity \pref{id_ANAC} holds for every $n=1,2 \dots, \myn$  
if and only if $D(\myn)\cdot {\bf \underline{b}}(\myn)={\bf\underline{{0}}}(\myn),$
where ${\bf\underline{{0}}}(\myn)$ is the column vector whose $\myn +1$ entries are all $0$.
Analogously, we define the matrix $A(\myn)=\left((-1)^j { i+1 \choose j+1 }{ i \choose j }\right)_{0\leq i,j\leq \myn}$
and set ${\bf\underline{{1}}}(\myn)$ to be  the column vector whose $\myn +1$ entries are all $1$.
Then, Identity \pref{idIII} for all $n$ from $1$ to $\myn+1$ yields that
$A(\myn)\cdot {\bf \underline{b}}(\myn)={\bf\underline{{1}}}(\myn)$.
Because the matrix $A(\myn)$ is lower triangular with $\pm1$ entries on the diagonal,
it is invertible, and we can write ${\bf \underline{b}}(\myn)=A(\myn)^{-1}\cdot{\bf\underline{{1}}}(\myn)$.
Hence, setting $H(\myn)=\left(H(\myn)_{i,j}\right)_{0\leq i,j\leq \myn}=D(\myn)\cdot A(\myn)^{-1}$,
to conclude the proof of Corollary~\ref{identity_ANAC}, it is enough to prove that:
\begin{equation}H(\myn)\cdot {\bf\underline{{1}}}(\myn)={\bf\underline{{0}}}(\myn).\label{matricial_id}\end{equation}

Notice first that $H(\myn)_{0,j} = 0$ for any $\myn \geq 1$ and $0\leq j\leq \myn$.
This follows immediately from the definition of $H(\myn) = D(\myn)\cdot A(\myn)^{-1}$
and the fact that $D(\myn)_{0,j} =0$ for all $0\leq j\leq \myn$.
The main part of the proof is now to show that for $1\leq i\leq \myn, 0\leq j\leq \myn$, we have
$H(\myn)_{i,j}=(-1)^{i+j}{i+j\choose i-j}C_j$.
Indeed, Proposition~\ref{identity_Catalan} then ensures that
\begin{equation*}\text{for all } i \geq 1, \quad \sum_{j=0}^i (-1)^{i+j}{{i+j}\choose {i-j}}  \Catalan{j}=0, \quad \text{hence, for all } 1 \leq i \leq \myn, \quad \sum_{j=0}^{i} H(\myn)_{i,j} = 0.
\end{equation*}
Notice that for $j>i$, ${i+j\choose i-j}=0$. Consequently, the above equality is equivalent to 
\begin{equation*}
\text{for all } 1 \leq i \leq \myn, \quad \sum_{j=0}^{\myn} H(\myn)_{i,j} = 0.
\end{equation*}
Combined with $H(\myn)_{0,j} = 0$ for all $0\leq j\leq \myn$, this yields $H(\myn)\cdot {\bf\underline{{1}}}(\myn)={\bf\underline{{0}}}(\myn)$ as desired.

\smallskip

So let us focus on proving that for $1\leq i\leq \myn, 0\leq j\leq \myn$,
$H(\myn)_{i,j}=(-1)^{i+j}{i+j\choose i-j}C_j$.

Since $H(\myn)$ is characterized by $H(\myn)\cdot A(\myn)=D(\myn)$,
we have to prove that, for every $i,j$ such that $1\leq i\leq \myn$ and $0\leq j\leq \myn$,
\begin{equation}
\sum_{k=0}^{\myn}(-1)^{i+k}{i+k\choose i-k}{k+1\choose j+1}{k\choose j} C_k={i+j\choose i-j}^2C_j\text{.}\label{product_ex}
\end{equation}
If $k > i$, the summand vanishes. Moreover, it is easy to check that, if $j>i$, both sides in Identity \pref{product_ex} vanish, since all terms are $0$.
Hence, to conclude the proof, it is sufficient to prove that the following identity holds for any $i>0$ and $0 \leq j \leq i$:
\begin{equation}
\sum_{k=0}^{i}(-1)^{i+k}{i+k\choose i-k}{k+1\choose j+1}{k\choose j} C_k={i+j\choose i-j}^2C_j\label{product_id}
\end{equation}
For this purpose, for any $i>0$, $k\geq 0$ and $0\leq j\leq i$, we define
$$F_j(i,k)=\frac{(-1)^{i+k}{i+k\choose i-k}{k+1\choose j+1}{k\choose j} C_k}{{i+j\choose i-j}^2C_j}\text{,}$$
and we show that for any $i>0$ and $0\leq j\leq i$
$$\sum_{k=0}^i F_j(i,k)=1\text{.}$$
In order to prove this identity, we apply the WZ method described in \cite{PWZ}.
We look for a function
$G_j(i,k)$ that satisfies the following identity:
\begin{equation}F_j(i+1,k)-F_j(i,k)=G_j(i,k+1)-G_j(i,k)\text{.}\label{recurrence_wz}\end{equation}
Using the package {\tt Gosper} of {\tt Mathematica}, we determine that $G_j(i,k)$ should be defined by
$$G_j(i,k)=\frac{(-1)^{i+k}{i+k+1\choose i-k+1}{k+1\choose j+1}{k\choose j} C_k}{{i+j\choose i-j}^2C_j}\cdot\frac{2(i+1)(k-j)^2}{(i+j+1)^2(i+k+1)}\text{.}$$
Indeed we then have
$$F_j(i+1,k)-F_j(i,k)-G_j(i,k+1)+G_j(i,k)=w_j(i,k)\cdot\frac{(-1)^{i+k}{i+k+1\choose i-k+1}{k+1\choose j+1}{k\choose j} C_k}{{i+j\choose i-j}^2C_j},$$
where the term $$w_j(i,k)=-\frac{(i-j+1)^2}{(i+j+1)^2}-\frac{i-k+1}{i+k+1}+\frac{2(i+1)(i-k+1)}{(i+j+1)^2}+\frac{2(i+1)(k-j)^2}{(i+k+1)(i+j+1)^2}$$
can be proved to be equal to zero by straightforward computations.

Now, if we sum up both sides of Identity \pref{recurrence_wz} over $0\leq k\leq i+1$,
we can easily see that the right hand side telescopes to $G_j(i,i+2)-G_j(i,0) =0$. Indeed, 
we have that $G_j(i,i+2)=0$, since the binomial coefficient ${i+k+1\choose i-k+1}$ vanishes for $k=i+2$.
And we also have $G_j(i,0)=0$, since ${k\choose j}$ vanishes for $k=0$ and $j>0$, and $(k-j)^2=0$ for $k=j=0$.
This implies that $\displaystyle{\sum_{k=0}^{i+1} \big(F_j(i+1,k)-F_j(i,k)\big) =0}$.
We remark that $F_j(i,i+1) =0$, since ${i+k\choose i-k}$ vanishes if $k=i+1$, and we deduce that $f_j(i):=\displaystyle{\sum_{k=0}^i F_j(i,k)}$ is independent of $i$.
Observing finally that $f_j(j)=F_j(j,j)$ (all other terms vanish, since ${k\choose j} =0$ for $k<j$),
we deduce that $f_j(j)= 1$, and we get the assertion.
\end{proof}

\section{A new bijection between trees and parallelogram polyominoes}
\label{polyominoes}

We recall that a {\pp} of size $n$ is a pair of lattice paths of length $n+1$ with south-west and south-east steps starting at the same point, ending at the same point, and never meeting each other. Figure~\ref{fig:example_pps} shows some examples of \pps of size $4$. The two paths defining a given {\pp} delimit a connected set of boxes. We will consider the {\pp} from this point of view.

\begin{figure}[H]
$$
\begin{array}{c}
\includegraphics[scale=0.8]{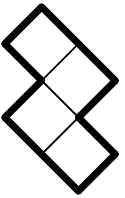}
\end{array}
\hspace{1cm}
\begin{array}{c}
\includegraphics[scale=0.8]{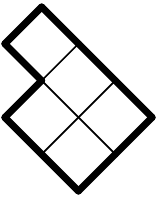}
\end{array}
\hspace{1cm}
\begin{array}{c}
\includegraphics[scale=0.8]{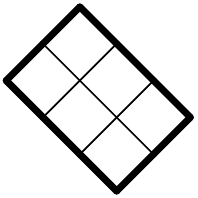}
\end{array}
$$
\vspace{-.5cm}
\caption{Example of \pps of size $4$ \label{fig:example_pps}}
\end{figure}

We now describe a bijection between \pps of size $n$ and binary trees with $n$ vertices by showing that a {\pp} hides a non-ambiguous tree.
In Subsection \ref{sub:bij_pp_trees} we describe an injective map $\Psi$ from \pps to trees.
In Subsection \ref{sub:bij_trees_pp} we describe a map $\Lambda$ from trees to parallelogram polyominoes, and prove in Subsection~\ref{sub:bij_proof} that it is the inverse of $\Psi$.

\subsection{The map $\Psi$ from \pps to trees}
\label{sub:bij_pp_trees}


Given a {\pp} $P$, consider the set $\myS(P)$ of dots defined as follows:
\begin{itemize}
\item we enlighten $P$ from north-west to south-east and from north-east to south-west;
\item we put a dot in the enlightened boxes.
\end{itemize}

\begin{lemma}
For any {\pp} $P$, $\myS(P)$ is a non-ambiguous tree.
\label{lem:is_ana}
\end{lemma}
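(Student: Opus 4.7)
My plan is to verify the three defining conditions of a non-ambiguous tree for $\myS(P)$. I will use the following notation: for each non-empty row $x$ of $P$, let $\ell_x$ and $r_x$ be respectively the smallest and largest $y$ with $(x,y) \in P$, and symmetrically for each non-empty column $y$ let $t_y$ be the smallest $x$ with $(x,y) \in P$. The parallelogram structure of $P$ gives three facts I will use repeatedly: the sequences $\ell_x$, $r_x$ and $t_y$ are weakly increasing; the non-empty rows and columns of $P$ form initial intervals $[0,X_{\max}]$ and $[0,Y_{\max}]$; and two consecutive non-empty rows overlap, i.e. $\ell_x \le r_{x-1}$ for $x \ge 1$, with the dual statement for columns. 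The enlightenment rule then makes $\myS(P)$ explicit: it is the union of the leftmost-of-row points $(x,\ell_x)$ (illuminated from the north-west) and the topmost-of-column points $(t_y,y)$ (illuminated from the north-east).

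Conditions~\ref{condition_1_ana} and~\ref{condition_3_ana} follow at once from this description. Since $(0,0) \in P$ we have $\ell_0 = 0 = t_0$, so $(0,0) \in \myS(P)$. And since every row $x \in [0,X_{\max}]$ is non-empty it contributes the point $(x,\ell_x)$ to $\myS(P)$ (and symmetrically for each column), so there is no empty row or column between occupied ones.

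For condition~\ref{condition_2_ana} I will proceed by case analysis on a non-root point $p = (x,y) \in \myS(P)$. If $p = (x,\ell_x)$ with $t_y < x$, then $(t_y,y) \in \myS(P)$ provides a same-column ancestor, and no same-row ancestor exists in $\myS(P) \subseteq P$ because no box of $P$ has $y$-coordinate less than $\ell_x$ in row $x$. The symmetric case $p = (t_y,y)$ with $\ell_x < y$ yields the dual conclusion. The delicate case --- which I expect to be the main obstacle --- is the remaining one where $p$ is simultaneously of the form $(x,\ell_x)$ and $(t_y,y)$: then neither a column-ancestor nor a row-ancestor exists in $\myS(P)$, which would violate the condition. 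I plan to rule this case out for non-root points by invoking the overlap property: assuming for instance $x \ge 1$, we have $y = \ell_x \le r_{x-1}$, so column $y$ already appears in row $x-1$, whence $t_y \le x-1 < x$, contradicting $t_y = x$; the case $y \ge 1$ is symmetric. This is precisely the step where the parallelogram hypothesis on $P$ is essential, since for an arbitrary polyomino an interior corner could land in $\myS(P)$ with no ancestor at all. Once this case is eliminated, the three defining conditions are established and $\myS(P)$ is a non-ambiguous tree.
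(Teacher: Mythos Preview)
Your proof is correct and follows the same idea as the paper's, though the paper's argument is a two-sentence sketch whereas you give a full verification. The paper simply asserts that the forbidden pattern cannot occur among enlightened boxes and that only the northernmost box is enlightened from both directions, hence every other dot has a parent; your case analysis makes these claims precise, and your use of the overlap property $\ell_x \le r_{x-1}$ is exactly what justifies the paper's ``only the northernmost box can be enlightened twice''. One small addition worth making explicit: when you argue $\ell_{x} \le r_{x-1}$ implies $(x-1,y)\in P$, you are also implicitly using $\ell_{x-1} \le \ell_x = y$, which follows from the monotonicity you stated.
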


\begin{proof}
The above construction ensures that it is impossible that all three points in the pattern $\pattern$ are enlightened.
Moreover, only the northernmost box in the {\pp} can be enlightened twice. This implies that every dot (except for the one in the northernmost box) has a parent.
\end{proof}

Let $\Psi$ be the application that associates to any {\pp} of size $n$ the underlying binary tree of $\myS(P)$ (that has $n$ vertices).
An example of this application is shown in Figure~\ref{bijection_polyominoe_arbre}.

\begin{figure}[H]
$$
\begin{array}{c}
\includegraphics[scale=0.8]{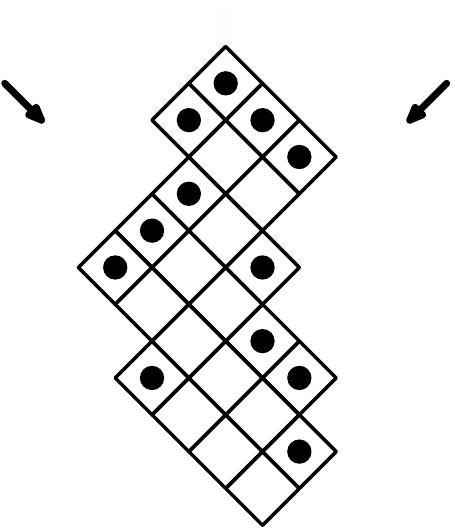}
\end{array}
\xrightarrow[\text{tree}]{\text{non-ambiguous}}
\begin{array}{c}
\includegraphics[scale=0.8]{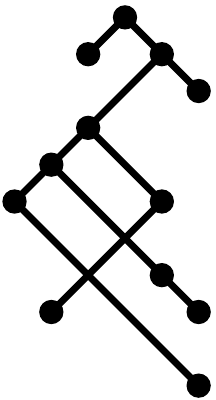}
\end{array}
\xrightarrow[]{\text{tree}}
\begin{array}{c}
\includegraphics[scale=0.8]{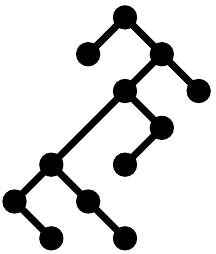}
\end{array}
$$
\vspace{-.5cm}
\caption{
Parallelogram polyominoes are just a way of drawing a binary tree in the plane
\label{bijection_polyominoe_arbre}
}
\end{figure}

\begin{proposition}\label{prop:bij_pp_trees}
The map $\Psi$ is injective.
\end{proposition}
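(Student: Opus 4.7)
The plan is to show that $P$ can be uniquely recovered from $T = \Psi(P)$, by first recovering the non-ambiguous tree $A := \myS(P)$ from $T$, and then recovering $P$ from $A$.

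For the second step, I would prove the following characterization of $P$ in terms of $A$: a cell $(x,y)$ lies in $P$ if and only if there exist dots $(x, y_0), (x_0, y) \in A$ with $y_0 \leq y$ and $x_0 \leq x$. The forward direction is immediate from the definition of $\myS$: the leftmost cell of row $x$ in $P$ is a dot of $A$ (giving the required $(x, y_0)$), and the topmost cell of column $y$ is a dot of $A$ (giving $(x_0, y)$). For the converse, I would use the key monotonicity property of parallelogram polyominoes, namely that the rightmost column occupied in row $x$ is a non-decreasing function of $x$ (and symmetrically for rows). Applied to the cell $(x_0, y) \in P$ with $x_0 \le x$, this forces $y$ to be within the range of occupied columns in row $x$, while $(x, y_0) \in P$ with $y_0 \le y$ ensures $y$ is not below the leftmost column of row $x$; together these yield $(x, y) \in P$.

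For the first step, I would argue by induction on the size $n$ of $T$. The root of $T$ always corresponds to the topmost cell of $P$, placed at $(0,0)$ in $A$. Splitting $T$ at the root into left and right subtrees $T_L$ and $T_R$, I would associate to each subtree a sub-polyomino $P_L$ or $P_R$ of $P$, obtained by a suitable decomposition separating the portion of $P$ attached to the root along its column from the portion attached along its row. By the induction hypothesis, $P_L$ and $P_R$ are uniquely determined by $T_L$ and $T_R$. The parallelogram property of $P$ then forces a unique way of reassembling $P_L$, $P_R$, and the root cell into $P$.

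The main obstacle is the first step. Defining the sub-polyominoes so that $\Psi(P_L) = T_L$ and $\Psi(P_R) = T_R$ requires care, and showing that their recombination is uniquely forced by the parallelogram structure needs a delicate geometric analysis, in the same spirit as the recursive interlacing argument used in the proof of Proposition~\ref{carl1}. An alternative route, should the direct recursion prove cumbersome, is to appeal to a cardinality argument: once the inverse map $\Lambda$ is constructed in Subsection~\ref{sub:bij_trees_pp}, surjectivity of $\Psi$ combined with the Catalan-number count of both parallelogram polyominoes and binary trees of size $n$ gives injectivity for free.
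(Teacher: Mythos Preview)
Your second step is correct and is essentially the content of the paper's filling operation $F$ together with Lemma~\ref{lem:extensions} ($\hat{\myS}=\hat{F}^{-1}$): the parallelogram polyomino is recovered from its enlightened dots by filling.

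The first step, however, is where the real content lies, and your sketch does not close the gap. Bear in mind that many non-ambiguous trees share the same underlying binary tree $T$ (this is the point of the hook formula in Subsection~\ref{subsec:hookformula}); what you must show is that among all of them, \emph{exactly one} arises as $\myS(P)$ for some parallelogram polyomino. Your inductive plan reduces this to the claim that the interlacing of rows and columns of $P_L$ and $P_R$ is forced by the parallelogram constraint, but you do not argue this, and it is far from obvious: the interlacing determines the coordinates of the vertices of $A$, the coordinates must satisfy the monotonicity condition of Lemma~\ref{lem:ana_that_are_pp}, and extracting a unique interlacing from that condition is precisely what the paper's turn and zigzag labelings (all of Subsection~\ref{sub:bij_trees_pp}) are designed to do. So your ``main obstacle'' is in fact the whole difficulty; the recursion does not sidestep it.

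The paper's own proof avoids all of this. It never tries to reconstruct $P$ from $T$; instead it grows $P$ and $P'$ simultaneously, one step on each boundary path at a time, building $\myS(P)$ and $\myS(P')$ alongside. At the first step where $P$ and $P'$ differ, one of the two new steps creates a new enlightened dot attached to some vertex $v$, while the other does not; hence $v$ has a different number of children in $\Psi(P)$ and $\Psi(P')$. This incremental argument is short and self-contained, and does not rely on any of the machinery of Subsection~\ref{sub:bij_trees_pp}.

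Your alternative route (cardinality plus surjectivity via $\Lambda$) is logically valid, but note that in the paper's organization Proposition~\ref{prop:bij_pp_trees} is proved \emph{before} $\Lambda$ is even defined, and is then used in the final step to conclude that $\Lambda=\Psi^{-1}$. Your alternative would reverse this dependency, which is fine, but it shifts all the work to the construction and correctness of $\Lambda$ rather than eliminating it.
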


\begin{proof}
Parallelogram polyominoes may be constructed incrementally,
by adding, at every step of the construction process,
one step to each of the two paths defining the {\pp}.
Along the construction of any {\pp} $P$, we may build simultaneously
a non-ambiguous tree whose underlying binary tree is $\Psi(P)$.
More precisely, when adding one step to each of the two paths defining $P$,
we add the enlightened dot(s) corresponding to the inserted steps, when needed.
Figure~\ref{dem_bij_polyo_arbre} shows an example of this construction.

\begin{figure}[H]
\begin{center}
$$
\begin{array}{c}
\includegraphics[scale=1.0]{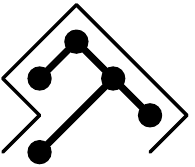}
\end{array}
$$
\end{center}
\caption{An example of {\pp} with its tree under construction\label{dem_bij_polyo_arbre}}
\end{figure}

Considering two different \pps $P$ and $P'$,
there is a first step in the construction process
when the new step added to one path in the construction of $P$
differs from its homologous in $P'$.
We prove that this implies that $\Psi(P) \neq \Psi(P')$.
Indeed, w.l.o.g, the step added to $P$ is SW-oriented, while the corresponding step added to $P'$ is SE-oriented.
This means that only one of these steps is associated with a new dot, connected to its parent $v$.
The dot $v$ exists in both trees $\Psi(P)$ and $\Psi(P')$, but it does not have the same number of children in both trees.
\end{proof}

Since it is known that there are as many \pps of size $n$ as binary trees with $n$ vertices,
we deduce from Proposition~\ref{prop:bij_pp_trees} that the map $\Psi$ is a bijection.
However, it is actually possible to describe the inverse of $\Psi$.
In what follows, we first describe a map $\Lambda$ from binary trees to parallelogram polyominoes,
and we next prove that it is the inverse of $\Psi$.

\subsection{The map $\Lambda$ from trees to \pps}
\label{sub:bij_trees_pp}


The map $\Lambda$ we define here builds a {\pp} of size $n$ from any binary tree $T$ with $n$ vertices.
The main part of the construction is to associate to $T$ a specific non-ambiguous tree $A(T)$
whose underlying binary tree is $T$.
The {\pp} $\Lambda(T)$ is then immediately obtained from $A(T)$.

\subsubsection{Building the non-ambiguous tree $A(T)$}

The main difficulty in describing $A(T)$ is to decide where the points corresponding to the vertices of $T$ should be drawn in the plane.
Like in Subsection~\ref{subsec:hookformula}, their relative position is encapsulated in two total orders
$\alpha_L$ and $\alpha_R$ on the set of vertices of $T$ that are the end of a left or right edge.
We will define these orders in Equation~\eqref{eq:def_order} p.\pageref{eq:def_order},
but it requires some auxiliary definitions and lemmas.

\medskip
%
%

Let $T$ be a tree and $T_r$ be the tree obtained by grafting $T$ as the right child 
of a new vertex~$r$ called the virtual root.
We define two labelings of the vertices of $T$: the turn labeling and the zigzag labeling.

The \emph{turn labeling}, denoted by $t$, is the application on the vertices $s$ of $T$ defined by:
$$
t(s) = \text{the number of turns in the path from $r$ to $s$ in $T_r$}.
$$
For example, in Figure \ref{fig:turning_labeling}, every vertex $s$ is labeled by $t(s)$.

\begin{figure}[H]
\begin{center}
$$
T =
\begin{array}{c}
\includegraphics[scale=0.8]{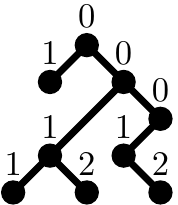}
\end{array}
\hspace{2cm}
T_r =
\begin{array}{c}
\includegraphics[scale=0.8]{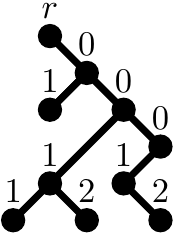}
\end{array}
$$
\end{center}
\caption{The turn labeling \label{fig:turning_labeling}}
\end{figure}

The \emph{zigzag labeling}, denoted by $z$, is the application on the vertices $s$ of $T$ defined by:
$$
z(s) = \left| E(T,s) \right|
$$
where $E(T,s)$ is defined on $T_r$ as follows:
\begin{itemize}
\item let $\mathcal{P}$ be the path from the virtual root $r$ to $s$;
\item let $s_1, s_2, \dots, s_k$ be the vertices of $\mathcal{P}$ where $\mathcal{P}$ is turning;
\item $E(T,s)$ is the set of all vertices $x$ of $T$ except those such that one of the followings holds:
\begin{itemize}
 \item $x \neq s$ is a descendant of $s$, or
 \item $x$ belongs to the right (resp. left) subtree of $s_i$ and the path $\mathcal{P}$ takes a left (resp. right) turn at $s_i$.
\end{itemize}
\end{itemize}
Because $E(T,s)$ can be seen as obtained from $T$ by removing some vertices with all their descendants,
it is straightforward that the vertices remaining in $E(T,s)$ always form an induced subtree of $T$.

To illustrate the above definitions, Figure \ref{fig:zigzag_labeling} shows a tree $T$ with the induced subtree $E(T,s)$ associated to the vertex $s$ of $T$, together with the complete zigzag labeling of $T$.

\begin{figure}[H]
	$$
	T_r =
	\begin{array}{c}
		\includegraphics[scale=0.8]{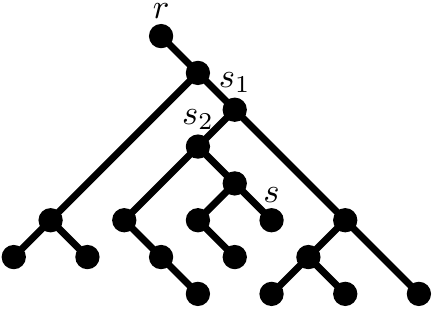}
	\end{array}
	\hspace{.5cm}
	E(T,s) =
	\begin{array}{c}
		\includegraphics[scale=0.8]{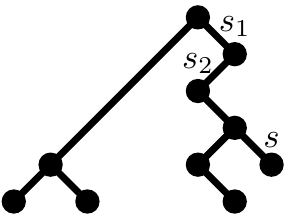}
	\end{array}
	\hspace{.5cm}
	T = \begin{array}{c}
		\includegraphics[scale=0.8]{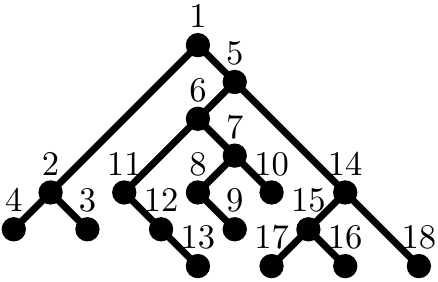}
	\end{array}
	$$
\caption{The zigzag labeling \label{fig:zigzag_labeling}}
\end{figure}

With the turn and zigzag labelings of $T$, we will define (see Equation~\eqref{eq:def_order} p.\pageref{eq:def_order}) the orders $\alpha_L$ and $\alpha_R$ on the vertices of $T$ from which $A(T)$ (and later $\Lambda(T)$) is deduced.

\medskip

An important fact in the construction is that the zigzag labeling is a bijection\footnote{In fact this labeling may also be obtained by performing a kind of "zigzag" depth-first search of $T$.} between the set of vertices of $T$ and $\{1,2, \ldots, n\}$, $n$ denoting the number of vertices of $T$.
This will follow as a consequence of Lemma~\ref{lemma:zigzag} below, for which we introduce some more definitions.


Let $u$ and $v$ be two vertices of a tree $T$.
The \emph{lowest common ancestor} of $u$ and $v$ is the last common vertex between the path going from the root to $u$ and the path going from the root to $v$.
We denote by $LCA(u,v)$ that vertex.

Figure \ref{younger_and_direct_oldest_ancestor} p.\pageref{younger_and_direct_oldest_ancestor} shows an example of lowest common ancestor.

\begin{lemma}
Let $u$ and $v$ be two different vertices of a tree $T$, then
$$E(T,u) \varsubsetneq E(T,v) \text{ or } E(T,v) \varsubsetneq  E(T,u)\text{.}$$
Moreover, $E(T,u) \varsubsetneq  E(T,v)$ holds if and only if the following holds:
$$
\left(
\begin{array}{c}
u = LCA(u,v)
\vspace{.2cm} \\
	\text{ or }
\vspace{.2cm} \\
\left[
	u,v \neq LCA(u,v) \hspace{.50cm} \text{ and } \hspace{.50cm}
	LCA(u,v) \text{ is in } V_L \hspace{.50cm} \text{ and } \hspace{.50cm}
	\begin{array}{c}
		u \text{ is in the right} \\
		\text{subtree of }LCA(u,v)
	\end{array}
\right]
\vspace{.2cm} \\
\text{ or }
\vspace{.2cm} \\
\left[
	u,v \neq LCA(u,v) \hspace{.50cm} \text{ and } \hspace{.50cm}
	LCA(u,v) \text{ is in } V_R \hspace{.50cm} \text{ and } \hspace{.50cm}
	\begin{array}{c}
		u \text{ is in the left} \\
		\text{subtree of }LCA(u,v)
	\end{array}
\right]
\end{array}
\right)
$$
\label{lemma:zigzag}
\end{lemma}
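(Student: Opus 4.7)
The plan is to reinterpret $E(T,s)$ as the prefix of a specific depth-first traversal of $T_r$ that I will call the \emph{zigzag DFS}: starting at $r$, whenever the traversal arrives at a vertex via a left (resp.\ right) edge it recurses first into its right (resp.\ left) subtree and only afterwards into its left (resp.\ right) subtree. Once it is established that $E(T,s)$ is exactly the set of vertices of $T$ visited at or before $s$ in the zigzag DFS, both parts of the lemma fall out easily, since DFS visit times are totally ordered and the relative order in which $u$ and $v$ are visited is controlled by the behaviour of the zigzag rule at their lowest common ancestor.

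First I would prove the identification $E(T,s) = \{x \in T : x \text{ is visited at or before } s\}$. At a turn point $s_i$ of the path $r \to s$, the incoming and outgoing edges are of opposite types, so the zigzag rule instructs the DFS to recurse first into the side opposite to the arrival, which coincides with the direction of the outgoing edge, hence toward $s$; consequently the other side of $s_i$ is visited only after leaving $s$, and this matches exactly the ``left-turn removes right subtree, right-turn removes left subtree'' exclusion in the definition of $E(T,s)$. At a non-turn point, the incoming and outgoing edges are of the same type, so the zigzag rule sends the DFS first into the opposite subtree, which is visited in its entirety before the path continues toward $s$; this accounts for the off-path vertices that the definition of $E(T,s)$ retains. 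Finally, descendants of $s$ other than $s$ itself are visited during the recursion initiated at $s$, hence after $s$, matching the first exclusion clause. A vertex that escapes both clauses is either on the path $r \to s$ or lies in the ``opposite'' subtree of some non-turn point of that path, and in either case is visited at or before $s$.

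Given this identification, the first assertion of the lemma is immediate: visit times are totally ordered, so either $E(T,u) \varsubsetneq E(T,v)$ or $E(T,v) \varsubsetneq E(T,u)$. For the characterisation, set $w = LCA(u,v)$. If $u = w$, then $u$ is a proper ancestor of $v$, visited before $v$, which gives the first case. Otherwise $u$ and $v$ lie in distinct subtrees of $w$, and the DFS visits first whichever subtree is explored first at $w$: this is the right subtree if $w \in V_L$ (the DFS entered $w$ via a left edge), and the left subtree if $w \in V_R$ (entered via a right edge). Treating the root $\rho$ of $T$ as playing the role of $V_R$ to handle the case $w = \rho$, for the natural reason that it is entered from $r$ via a right edge in $T_r$, these two sub-cases match exactly the remaining two conditions of the lemma.

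The main obstacle is the identification step: one has to match, on a case-by-case basis, the set-theoretic exclusions in the definition of $E(T,s)$ -- framed in terms of turn/non-turn points and left/right subtrees -- against the order in which the zigzag DFS visits subtrees. The structural fact enabling this is that, at any vertex $v$ on the path $r \to s$, the direction the DFS recurses into first coincides with the direction of the outgoing edge along the path \emph{if and only if} $v$ is a turn point; everything else is a direct consequence of this dichotomy and routine depth-first reasoning about ancestors and lowest common ancestors.
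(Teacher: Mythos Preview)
Your proof is correct and takes a genuinely different route from the paper's. The paper argues directly from the definition of $E(T,\cdot)$: given $u\neq v$, it singles out the unique vertex $\alpha\in\{u,v\}$ for which the condition in the statement holds (calling the other $\beta$), and then checks that $E(T,\alpha)\varsubsetneq E(T,\beta)$ by comparing the two root-to-vertex paths at $LCA(u,v)$ --- at the LCA, the path to $\alpha$ turns while the path to $\beta$ does not, and this discrepancy is enough to force the strict inclusion. Your approach instead establishes the global fact that $E(T,s)$ is exactly the prefix (up to and including $s$) of a ``zigzag'' depth-first traversal of $T_r$; the paper alludes to this interpretation in a footnote but does not prove or use it. What your route buys is conceptual clarity: once the DFS identification is in hand, the total-order assertion is immediate and the characterisation reduces to reading off which child the DFS enters first at the LCA. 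The paper's route is shorter and avoids introducing an auxiliary traversal, but your argument makes transparent why $z$ is a bijection onto $\{1,\dots,n\}$ (the subsequent corollary), which the paper only deduces as a consequence of this lemma. Your handling of the root of $T$ as belonging morally to $V_R$ (because it is reached from the virtual root $r$ by a right edge) is also a point the paper leaves implicit.
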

Recall from Subsection~\ref{subsec:hookformula} that $V_L$ (resp. $V_R$) denotes the set of the end points of the left (resp. right) edges of $T$.

\begin{proof}
Let $u,v$ be two different vertices of a tree $T$.
By definition of $LCA(u,v)$, there exists a unique vertex $\alpha \in \{u,v\}$ (we denote by $\beta$ the other vertex such that $\{u,v\} = \{\alpha,\beta\}$) such that one of the three following cases occurs:
\begin{enumerate}
\item \label{case:ancetre} $\alpha = LCA(u,v)$
\item \label{case:cousing} $LCA(u,v) \not\in \{ u,v \}$ and $LCA(u,v) \in V_L$ and $\alpha$ is in the right subtree of $LCA(u,v)$
\item \label{case:cousind} $LCA(u,v) \not\in \{ u,v \}$ and $LCA(u,v) \in V_R$ and $\alpha$ is in the left subtree of $LCA(u,v)$
\end{enumerate}
We will now prove that cases \ref{case:ancetre}, \ref{case:cousing} and \ref{case:cousind} imply $E(T,\alpha) \varsubsetneq E(T,\beta)$.
For the first case, $E(T,\alpha) \varsubsetneq E(T,\beta)$ is a direct consequence of the definition of $E$.
For the two other cases, we just have to remark that
\begin{itemize}
\item the path from the virtual root to $\alpha$ turns at $LCA(\alpha,\beta)$,
\item the path from the virtual root to $\beta$ goes through but doesn't turn at $LCA(\alpha,\beta)$,
\end{itemize}
to conclude that $E(T,\alpha) \varsubsetneq E(T,\beta)$.
Now, Lemma \ref{lemma:zigzag} follows immediately.
\end{proof}

\begin{corollary}
\label{cor:l_bijection}
The labeling $z$ is a bijection between the set of vertices $V$ of $T$ and $\{1,2, \ldots, |V| \}$.
%
Moreover, for all vertices $u$ and $v$ in $V$, we have: $z(u) < z(v) \Longleftrightarrow E(T,u) \varsubsetneq  E(T,v)$.
\end{corollary}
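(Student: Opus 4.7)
The plan is to derive the corollary almost immediately from Lemma~\ref{lemma:zigzag}, which establishes the key trichotomy that for any two distinct vertices $u,v\in V$, the sets $E(T,u)$ and $E(T,v)$ are comparable under strict inclusion. The engine of the proof is that finite sets strictly ordered by inclusion are automatically strictly ordered by cardinality.

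More concretely, I would first show injectivity of $z$. By Lemma~\ref{lemma:zigzag}, the family $\{E(T,u)\}_{u\in V}$ forms a totally ordered chain of pairwise distinct subsets of $V$ under strict inclusion. Since strict inclusion of finite sets implies strict inequality of cardinalities, the values $z(u)=|E(T,u)|$ are pairwise distinct, so $z$ is injective. Next I would argue that $z$ lands in $\{1,2,\ldots,|V|\}$: each $E(T,u)$ is a nonempty subset of $V$ (it at least contains $u$ itself, since $u$ is not a proper descendant of $u$, nor does it lie strictly inside a subtree cut off by a turn on the path from $r$ to $u$), and it is obviously contained in $V$. Combining $|V|$ pairwise distinct integer values with the bounds $1\le z(u)\le |V|$ forces $z(V)=\{1,2,\ldots,|V|\}$, so $z$ is a bijection.

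For the second statement, the implication $E(T,u)\varsubsetneq E(T,v)\Rightarrow z(u)<z(v)$ is immediate from the definition of $z$ as the cardinality of $E(T,\cdot)$. Conversely, if $z(u)<z(v)$, then in particular $u\ne v$, so Lemma~\ref{lemma:zigzag} provides the dichotomy $E(T,u)\varsubsetneq E(T,v)$ or $E(T,v)\varsubsetneq E(T,u)$; the latter would give $z(v)<z(u)$, contradicting the hypothesis, so $E(T,u)\varsubsetneq E(T,v)$ as required.

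There is essentially no obstacle here: all the combinatorial work has been absorbed into Lemma~\ref{lemma:zigzag}, and the corollary is a clean cardinality-counting consequence. The only point to be slightly careful about is verifying that $E(T,u)$ is always nonempty (so that $z$ takes values in $\{1,\ldots,|V|\}$ rather than $\{0,\ldots,|V|\}$), but this follows from the observation that $u\in E(T,u)$: the vertex $u$ itself is not a proper descendant of $u$, and the definition of $E(T,s)$ does not remove $s$ itself at any turn.
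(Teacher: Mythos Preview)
Your proof is correct and matches the paper's intent: the paper states this result as a corollary of Lemma~\ref{lemma:zigzag} without giving an explicit argument, and the cardinality-chain reasoning you spell out (strict inclusion of the $E(T,u)$ forces strictly increasing cardinalities, hence $z$ is injective into $\{1,\ldots,|V|\}$ and therefore bijective) is precisely the deduction the paper leaves to the reader. Your care in checking that $u\in E(T,u)$, so that $z(u)\ge 1$, fills the only small gap.
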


Corollary~\ref{cor:l_bijection} is the key to defining the total orders $\alpha_L$ and $\alpha_R$ on the vertices of $T$, from which $A(T)$ and $\Lambda(T)$ will be derived.

\medskip

We define the partial orders $\alpha_L$ (resp. $\alpha_R$) on the ends of the left (resp. right) edges of $T$ by the relation:
\begin{equation}
\begin{array}{c}
u <_{\alpha_L} v \\
\text{(resp. $u <_{\alpha_R} v$)}
\end{array}
\hspace{1cm}  \text{ if and only if } \hspace{1cm} (t(u),z(u)) <_{\text{lex}} (t(v),z(v)) \label{eq:def_order}
\end{equation}
where $<_{\text{lex}}$ is the lexicographic order relation.
Because $z$ is a bijection, $\alpha_L$ and $\alpha_R$ are actually \emph{total} orders.

\medskip

For any binary tree $T$, we set $A(T)$ to be the triple $(T,\alpha_L,\alpha_R)$.

\begin{lemma}\label{lem:correctness}
For any binary tree $T$, $A(T)$ defines a non-ambiguous tree.
\end{lemma}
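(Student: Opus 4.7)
The plan is to invoke Lemma~\ref{lemma:crux}: it suffices to prove that $\alpha_L$ is a linear extension of the poset $V_L$ and, symmetrically, that $\alpha_R$ is a linear extension of $V_R$. Lemma~\ref{lemma:crux} will then provide a (unique) non-ambiguous tree with underlying tree $T$ and codes $(\alpha_L,\alpha_R)$, which is precisely $A(T)$. Observe first that, by Corollary~\ref{cor:l_bijection}, the map $z$ is a bijection from the vertex set of $T$ onto $\{1,\dots,n\}$, so distinct vertices have distinct lexicographic pairs $(t(\cdot),z(\cdot))$; this already ensures that the relations defined by~\eqref{eq:def_order} are \emph{total} orders on $V_L$ and $V_R$. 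What remains is their compatibility with $<_{V_L}$ and $<_{V_R}$.

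To check this, I would fix $u <_{V_L} v$, i.e.\ two vertices of $V_L$ such that there is a directed path from $u$ to $v$ in $T$, and aim at $(t(u),z(u)) <_{\text{lex}} (t(v),z(v))$. First, the path in $T_r$ from the virtual root $r$ to $v$ factors as the concatenation of the path $r \to u$ with the path $u \to v$; inserting extra vertices at the end of a root-path can only preserve or create turns, hence $t(u) \le t(v)$. Next, since $u$ is an ancestor of $v$ in $T$, we have $LCA(u,v) = u$, so the first case of Lemma~\ref{lemma:zigzag} applies and yields $E(T,u) \varsubsetneq E(T,v)$; by Corollary~\ref{cor:l_bijection} this translates into $z(u) < z(v)$. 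Combining $t(u) \le t(v)$ on the first coordinate with the strict inequality $z(u) < z(v)$ on the second, we conclude $(t(u),z(u)) <_{\text{lex}} (t(v),z(v))$, i.e.\ $u <_{\alpha_L} v$. The argument for $\alpha_R$ is literally the same, with $V_R$ in place of $V_L$.

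I do not expect any real obstacle: the labelings $t$ and $z$ have been designed precisely so that $t$ is weakly monotone and $z$ is strictly monotone along any ancestor-to-descendant path, and the lexicographic combination in~\eqref{eq:def_order} is tailored to exploit both. The only mildly delicate point is the inequality $t(u) \le t(v)$, which follows at once from the factorization of the root-path in $T_r$ through $u$; everything else is a direct application of Lemma~\ref{lemma:zigzag} and Corollary~\ref{cor:l_bijection}.
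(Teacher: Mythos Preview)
Your proposal is correct and follows essentially the same approach as the paper: both invoke Lemma~\ref{lemma:crux}, both deduce $t(u)\le t(v)$ from the factorization of the root-path through $u$, and both obtain the $z$-comparison from $u=LCA(u,v)$ via Lemma~\ref{lemma:zigzag} and Corollary~\ref{cor:l_bijection}. The only cosmetic difference is that the paper splits into the cases $t(u)<t(v)$ and $t(u)=t(v)$, while you prove $z(u)<z(v)$ outright and combine it with $t(u)\le t(v)$ in one stroke; this is a mild streamlining, not a different argument.
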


\begin{proof}
Let us denote $A(T) = (T,\alpha_L,\alpha_R)$.
We prove that $\alpha_L$ (resp. $\alpha_R$) is a linear extension of the poset $V_L$ (resp. $V_R$) of $T$ (see Subsection~\ref{subsec:hookformula} for the definition of these posets).
By Lemma~\ref{lemma:crux} (p.~\pageref{lemma:crux}), this ensures that $(T, \alpha_L, \alpha_R)$ encodes a non-ambiguous tree of shape $T$.

Let $u$ and $v$ be two vertices of $V_L$ (resp. $V_R$) such that $u <_{V_L} v$ (resp. $u <_{V_R} v$).
By definition, there exists a path from vertex $u$ of $V_L$ (resp. $V_R$) to vertex $v$ of $V_L$ (resp. $V_R$).
We deduce that $t(u) \le t(v)$. Therefore:
\begin{itemize}
\item if $t(u) < t(v)$ then, by definition of $\alpha_L$, $u <_{\alpha_L} v$ (resp. $u <_{\alpha_R} v$ );
\item if $t(u) = t(v)$ then, since there is a path from $u$ to $v$, we have $u = LCA(u,v)$.
We deduce from Lemma \ref{lemma:zigzag} that $z(u) < z(v)$. Hence, $u <_{\alpha_L} v$ (resp. $u<_{\alpha_R} v$).
\end{itemize}
We conclude that $\alpha_L$ (resp. $\alpha_R$) is a linear extension of $V_L$ (resp. $V_R$).
\end{proof}

\subsubsection{From $A(T)$ to the {\pp} $\Lambda(T)$}

In the sequel, by the notation $\begin{array}{c}\includegraphics[scale=0.6]{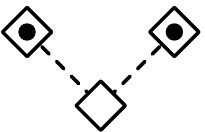}\end{array}$ (resp. $\begin{array}{c}\includegraphics[scale=0.6]{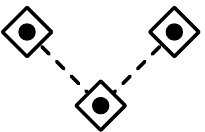}\end{array}$) we mean that there are two cells at coordinates $(x_1,y_1)$ and $(x_2,y_2)$ with $x_1 > x_2$ and $y_1 < y_2$ that contain dots (or vertices), and such that the cell at coordinates $(x_1,y_2)$ does not contain (resp. contains) a dot (or vertex). Notice that it is not necessary that $x_1=x_2+1$ nor that $y_1=y_2-1$ to have an occurrence of either the pattern $\begin{array}{c}\includegraphics[scale=0.6]{images/pattern_3}\end{array}$ or $\begin{array}{c}\includegraphics[scale=0.6]{images/pattern_4}\end{array}$.

For any non-ambiguous tree $A$, consider the operation which
converts every pattern $\begin{array}{c}\includegraphics[scale=0.6]{images/pattern_3}\end{array}$ to a pattern $\begin{array}{c}\includegraphics[scale=0.6]{images/pattern_4}\end{array}$.
We shall call this operation the {\em filling} of $A$, and denote it $F(A)$.

For any binary tree $T$, we define $\Lambda(T) = F(A(T))$.
Figure \ref{fig:bij_tree_pp}, shows all the steps of the construction.
\begin{figure}[H]
	$$
	\begin{array}{c}
		\includegraphics[scale=0.8]{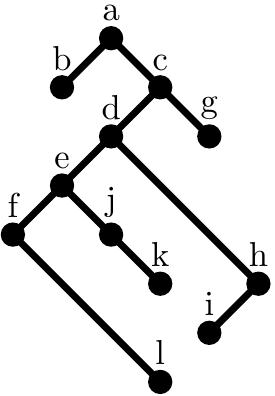}
	\end{array}
	\xrightarrow[(t,z)]{\text{Computation of}}
	\begin{array}{c}
		\includegraphics[scale=0.8]{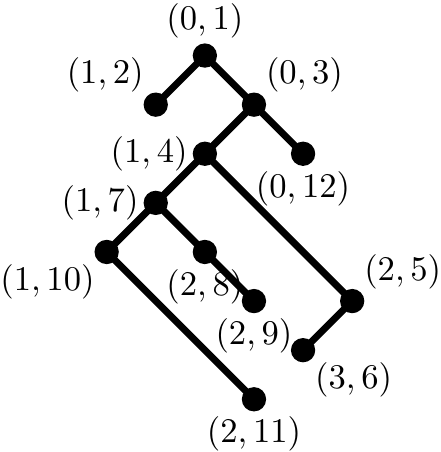}
	\end{array}
	\longrightarrow
	\begin{array}{c}
	\alpha_L = \text{bdefi} \\
	\\
	\alpha_R = \text{cghjkl}
	\end{array}
	\longrightarrow
	$$
	$$
	\begin{array}{c}
		\includegraphics[scale=0.8]{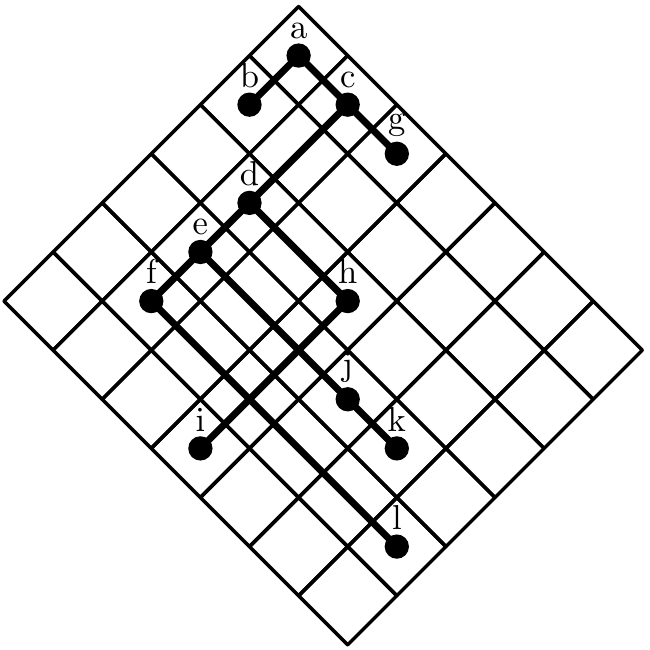}
	\end{array}
	\longrightarrow
	\begin{array}{c}
		\includegraphics[scale=0.8]{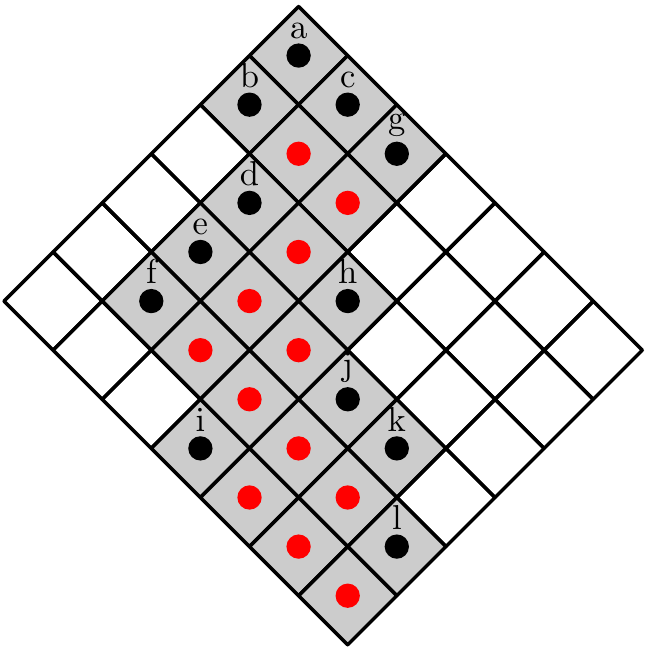}
	\end{array}
	$$
\caption{The map $\Lambda$ \label{fig:bij_tree_pp}}
\end{figure}

Lemma~\ref{lem:ana_that_are_pp} and Proposition~\ref{prop:produces_a_pp} below prove that
for any binary tree $T$, the set of points $\Lambda(T)$ is indeed a parallelogram polyomino.

\begin{lemma}\label{lem:ana_that_are_pp}
Let $A$ be a non-ambiguous tree, and define $V_L$ and $V_R$ as usual.
$F(A)$ is a parallelogram polyomino
as soon as for any two different vertices $u, v$ in $V_L$ (resp. $V_R$) we have
$
X(u) < X(v) \Rightarrow Y(u) \le Y(v)
$
(resp. $Y(u) < Y(v) \Rightarrow X(u) \le X(v)$).
\end{lemma}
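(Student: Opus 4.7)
My plan is to give an explicit description of $F(A)$ and then use the monotonicity hypotheses on $V_L$ and $V_R$ to verify the row- and column-profiles characteristic of a parallelogram polyomino.

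First, I would check by induction on the number of pattern completions performed to obtain $F(A)$ from $A$ that
\[
F(A) = \{(x,y)\in\mathbb{N}^2 : \exists\, (x,y')\in A \text{ with } y'\le y, \text{ and } \exists\, (x',y)\in A \text{ with } x'\le x\}.
\]
Indeed, if $(x,y)$ is produced by filling a pattern from $(x,y_1),(x_2,y)\in F(A)$ with $x_2<x$ and $y_1<y$, then the induction hypothesis provides $(x,y')\in A$ with $y'\le y_1<y$ and $(x',y)\in A$ with $x'\le x_2<x$.

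Next, for each $x$ in $\{0,\dots,\ell\}$ (where $\ell$ is the maximum $x$-coordinate in $A$), set $y_{\min}(x)=\min\{y:(x,y)\in A\}$; symmetrically define $x_{\min}(y)$. The key observation is that for $x\ge 1$, the point $(x,y_{\min}(x))$ cannot be a left child (its parent would otherwise lie above it in the same $x$-column), hence it belongs to $V_R$. Applying the $V_R$ hypothesis to the topmost points of two distinct columns then forces $y_{\min}$ to be non-decreasing in $x$, and symmetrically the $V_L$ hypothesis forces $x_{\min}$ to be non-decreasing in $y$. Combining this with the explicit description of $F(A)$, at fixed $y$ the set $\{x:(x,y)\in F(A)\}$ is the interval $[x_{\min}(y),x_{\max}(y)]$, where $x_{\max}(y)=\max\{x:y_{\min}(x)\le y\}$, non-emptiness $x_{\min}(y)\le x_{\max}(y)$ being immediate from $(x_{\min}(y),y)\in A$. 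Symmetrically, at fixed $x$ one obtains an interval $[y_{\min}(x),y_{\max}(x)]$.

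To conclude, one checks that $x_{\max}$ is non-decreasing in $y$ (because $\{x:y_{\min}(x)\le y\}$ grows with $y$) and that $y_{\max}$ is non-decreasing in $x$. The four boundary sequences being non-decreasing along their respective axes is precisely the characterization of a parallelogram polyomino in the SW/SE convention of the paper: its left and right boundary paths consist of SW and SE steps only. The main delicate point I expect is the monotonicity of $y_{\min}$ and $x_{\min}$, which crucially uses that the topmost points of the columns (resp. leftmost points of the rows) of $A$ lie in $V_R$ (resp. $V_L$); a minor additional check is that the root $(0,0)$, which lies in neither $V_L$ nor $V_R$, is consistent with these monotonicities, but this is immediate since $y_{\min}(0)=x_{\min}(0)=0$.
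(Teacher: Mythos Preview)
Your overall strategy --- give an explicit description of $F(A)$ and read off the row and column profiles --- is sound, and in fact more systematic than the paper's proof, which verifies column convexity by a direct pattern argument using oldest left ancestors and then treats connectivity and the corner conditions separately. The explicit formula $F(A)=\{(x,y): y_{\min}(x)\le y \text{ and } x_{\min}(y)\le x\}$ makes the interval structure of each row and column transparent. Two points, however, need correction.

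First, you have $V_L$ and $V_R$ interchanged. In the paper's conventions a \emph{left} child shares the $Y$-coordinate of its parent (the parent has strictly smaller $X$), while a \emph{right} child shares the $X$-coordinate (the parent has strictly smaller $Y$). Hence for $x\ge 1$ the point $(x,y_{\min}(x))$ cannot be a \emph{right} child --- its parent would have the same $X$ and a smaller $Y$, contradicting minimality of $y_{\min}(x)$ --- so it lies in $V_L$, not $V_R$. Symmetrically $(x_{\min}(y),y)\in V_R$ for $y\ge 1$. The monotonicity of $y_{\min}$ then follows from the $V_L$ hypothesis $X(u)<X(v)\Rightarrow Y(u)\le Y(v)$ applied directly, and that of $x_{\min}$ from the $V_R$ hypothesis. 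With this swap your argument goes through.

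Second, the final sentence --- that four non-decreasing boundary sequences already characterise a parallelogram polyomino --- hides a connectivity check: the two-cell set $\{(0,0),(1,1)\}$ has monotone boundaries in both directions but is not a polyomino. Here connectivity does hold, but it uses the tree structure once more. Since $(x_{\min}(y{+}1),y{+}1)\in V_R$, its parent is $(x_{\min}(y{+}1),y')$ for some $y'\le y$, whence $y_{\min}(x_{\min}(y{+}1))\le y$ and therefore $x_{\min}(y{+}1)\le x_{\max}(y)$. Thus consecutive rows of $F(A)$ overlap, $F(A)$ is connected, and together with row/column convexity and the presence of $(0,0)$ and $(\ell,k)$ this yields a parallelogram polyomino via the same characterisation the paper invokes.
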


\begin{proof}
In the proof of this lemma, 
we use a characterization of parallelogram polyominoes (see \cite{dedudu} for more details):
a parallelogram polyomino is a polyomino $P$ (\emph{i.e.}, a finite set of unit cells on the grid $\mathbb{N}\times\mathbb{N}$ -- oriented as shown on Figure~\ref{grid} -- that is edge-connected)
such that:
\begin{itemize}
\item $P$ is row and column convex, \emph{i.e.} each of its rows and columns is connected;
\item if we consider the minimum rectangle $R$ bounding $P$, then $P$ contains the top-most and bottom-most
corners of $R$.
\end{itemize}

Suppose that for every $u,v\in V_L$ we have $X(u) < X(v) \Rightarrow Y(u) \le Y(v)$ and that, for every $t,w\in V_R$, we have $Y(t) < Y(w) \Rightarrow X(t) \le X(w)$.
Recall that $\{X(v), v\in V_L\}$ is the interval $\{1,\dots,\vert V_L\vert\}$.
Without loss of generality, we will show that $F(A)$ is column convex. Suppose that $F(A)$ contains the two cells $(a,y_0)$ and $(b,y_0)$, $a<b$. We want to show that $F(A)$  contains all the cells $(c,y_0)$, with $a<c<b$. The two cells $(a,y_0)$ and $(b,y_0)$ can correspond either to a vertex of $A$ or to the $0$ of a pattern $\begin{array}{c}\includegraphics[scale=0.6]{images/pattern_3}\end{array}$ in $A$. We study the case where both cells correspond to occurrences of the pattern $\begin{array}{c}\includegraphics[scale=0.6]{images/pattern_3}\end{array}$, the other three possible cases being analogous. Denote by $g$ the southwestern-most vertex of $A$ such that $X(g)<a$ and $Y(g)=y_0$ and by $h$ the southeastern-most vertex of $A$ such that $X(h)=b$ and $Y(h)<y_0$. Let $h'\in V_L$ be the oldest left ancestor of $h$. We choose now a cell $f=(c,y_0)$, where $a<c<b$.
Because $X(h')=b$, $c<b$ and $\{X(v), v\in V_L\}=\{1,\dots,\vert V_L\vert\}$, there exists some vertex
$h''\in V_L$ such that $X(h'')=c$. But by our assumption, we deduce from $X(h'')<X(h')$ that $Y(h'')\leq Y(h')<y_0$. Hence, $h''$, $f$, and $g$ give rise to an occurrence of the pattern $\begin{array}{c}\includegraphics[scale=0.6]{images/pattern_3}\end{array}$, as shown in Figure \ref{fig:anappbis}.

\begin{figure}[H]
\begin{center}
$$
\begin{array}{c}
\includegraphics[scale=1.0]{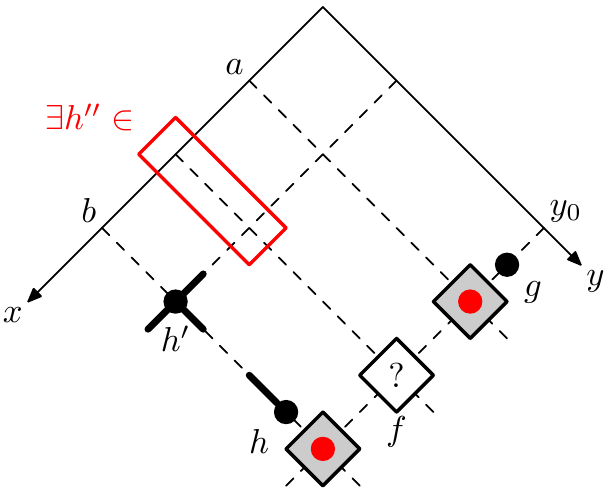}
\end{array}
$$
\end{center}
\caption{The positions of $h''$, $f$ and $g$\label{fig:anappbis}}
\end{figure}

This implies that $f$ belongs to $F(A)$, and hence $F(A)$ is column convex.

Now we will prove that $F(A)$ is connected. Consider two cells $c_1,c_2\in F(A)$. We suppose that both $c_1$ and $c_2$ correspond to the $0$ of two patterns of type $\begin{array}{c}\includegraphics[scale=0.6]{images/pattern_3}\end{array}$, the other possible cases being easier to prove. Denote by $d_i$, $i=1,2$, the southwestern-most vertex of $A$ such that $X(d_i)<X(c_i)$ and $Y(d_i)=Y(c_i)$. Since $F(A)$ is column convex, it contains the vertical strip $S_1$ (resp. $S_2$) joining $d_1$ and $c_1$ (resp. $d_2$ and $c_2$). Now $d_1$ and $d_2$ belong to $A$, and they are connected by a sequence of edges, that correspond to a subset $S$ of $F(A)$. Indeed, $F(A)$ is both row and column convex, and it contains all the cells corresponding to the vertices of $A$. Then, the two cells $c_1$ and $c_2$ are connected by $S_1\cup S_2\cup S$.

Finally, we consider the minimum rectangle $R$ bounding $F(A)$. The root of $A$ occupies the top-most 
corner, which is therefore contained in $F(A)$. Moreover, it is obvious that both the southwestern-most row and the southeastern-most column of $R$ contain a vertex of $A$, and for this reason, the bottom-most 
corner of $R$ is the $0$ in a pattern $\begin{array}{c}\includegraphics[scale=0.6]{images/pattern_3}\end{array}$ of $A$. Therefore, $F(A)$ contains also the bottom-most 
corner of $R$.

All these arguments prove that $F(A)$ is a \pp.
\end{proof}

\begin{remark}
We may prove that the condition in Lemma~\ref{lem:ana_that_are_pp} is also necessary:
if $F(A)$ is a parallelogram polyomino,
then for any two different vertices $u, v$ in $V_L$ (resp. $V_R$) we have
$
X(u) < X(v) \Rightarrow Y(u) \le Y(v)
$
(resp. $Y(u) < Y(v) \Rightarrow X(u) \le X(v)$).
\end{remark}

\begin{proposition}\label{prop:produces_a_pp}
For any binary tree $T$, $\Lambda(T)$ is a parallelogram polyomino.
\end{proposition}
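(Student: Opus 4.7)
The plan is to invoke Lemma~\ref{lem:ana_that_are_pp}: it will suffice to verify that for every two distinct vertices $u, v \in V_L$ with $X(u) < X(v)$ one has $Y(u) \leq Y(v)$, and symmetrically for $V_R$. By the symmetry between left and right, I focus on the statement for $V_L$.

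The main structural observation is that for $u \in V_L$ the $Y$-coordinate is propagated along left edges, so $Y(u) = Y(u^*)$ where $u^*$ denotes the \emph{oldest right ancestor} of $u$ (see p.~\pageref{def:oldest_right_ancestor}); note that $u^* \in V_R \cup \{\mathrm{root}\}$, and $Y(u^*) = 0$ if $u^* = \mathrm{root}$, otherwise $Y(u^*)$ equals the position of $u^*$ in $\alpha_R$. A direct inspection of the path $r \to u$ in $T_r$ then yields the key turn-count identity $t(u) = t(u^*) + 1$: the segment $u^* \to u$ is made only of left edges (no interior turn), while at $u^*$ exactly one turn arises, namely the transition from the incoming right edge (or from the initial edge $r \to \mathrm{root}$ if $u^* = \mathrm{root}$) to the first outgoing left edge.

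The condition $u <_{\alpha_L} v$ means $(t(u), z(u)) <_{\mathrm{lex}} (t(v), z(v))$. If $t(u) < t(v)$, then $t(u^*) < t(v^*)$; since $t(\mathrm{root}) = 0$, this forces $v^* \in V_R$, and either $u^* = \mathrm{root}$ (so that $Y(u) = 0 \leq Y(v)$) or $u^* <_{\alpha_R} v^*$ by the lexicographic definition of $\alpha_R$, in which case $Y(u) < Y(v)$. Otherwise $t(u) = t(v)$ and $z(u) < z(v)$, so $t(u^*) = t(v^*)$ and it is enough to prove $z(u^*) \leq z(v^*)$. Applying Lemma~\ref{lemma:zigzag} to $(u, v)$ produces three configurations. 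When $u = LCA(u, v)$, the equality $t(u) = t(v)$ forces the path $u \to v$ to consist only of left edges (otherwise a turn would arise at $u$, since $u \in V_L$), hence $u^* = v^*$. When $LCA(u, v) = w \in V_L$ with $u$ in the right subtree and $v$ in the left subtree of $w$, the vertex $u^*$ sits strictly in the right subtree of $w$, whereas $v^*$ is either a $V_R$-vertex strictly in the left subtree of $w$ or equals the oldest right ancestor of $w$ (hence an ancestor of $w$); Lemma~\ref{lemma:zigzag} reapplied to $(u^*, v^*)$ then gives $E(T, u^*) \subsetneq E(T, v^*)$. The configuration $LCA(u, v) = w \in V_R$ is handled symmetrically, with $u^*$ either equal to $w$ or a $V_R$-vertex strictly in the left subtree of $w$, and $v^*$ strictly in the right subtree of $w$. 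The main obstacle lies exactly in this last case analysis: one must correctly locate $u^*$ and $v^*$ with respect to $LCA(u, v)$ and re-invoke Lemma~\ref{lemma:zigzag} at the level of oldest right ancestors, taking care to rule out edge cases such as $v^* = \mathrm{root}$ (which is excluded by the constraint $t(u^*) = t(v^*)$ combined with $w$ being a $V_L$-ancestor of $u^*$). The argument for $V_R$ is symmetric, using the oldest left ancestor, with the minor adjustment that if this ancestor is the root then $t(u) = 0$, since no extra turn arises at the virtual edge $r \to \mathrm{root}$, which is itself a right edge.
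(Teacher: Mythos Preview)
Your approach is essentially the paper's: it too invokes Lemma~\ref{lem:ana_that_are_pp}, introduces the same ancestor (there called the \emph{direct oldest ancestor} $DOA(u)$, identical to your $u^*$), uses the identity $t(u)=t(u^*)+1$, and splits into the cases $t(u)<t(v)$ versus $t(u)=t(v)$, $z(u)<z(v)$.

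There is, however, a genuine gap in your handling of the subcase $w=LCA(u,v)\in V_L$. You correctly list two possible locations for $v^*$: strictly in the left subtree of $w$, or equal to $w^*$ (an ancestor of $w$). But in the second alternative $v^*$ is an ancestor of $u^*$, so $LCA(u^*,v^*)=v^*$, and Lemma~\ref{lemma:zigzag} then yields $E(T,v^*)\subsetneq E(T,u^*)$, i.e.\ $z(v^*)<z(u^*)$ --- the \emph{opposite} of what you need. What is missing is the observation that this second alternative cannot occur under the standing hypothesis $t(u^*)=t(v^*)$: since $u^*$ lies in the right subtree of $w$, the path $r\to u^*$ turns at $w$ (left edge into $w$, right edge out), so $t(u^*)\ge t(w)+1$; whereas if $v^*=w^*$ then $t(v^*)=t(w)-1$, giving $t(u^*)\ge t(v^*)+2$. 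Your turn-count argument for excluding $v^*=\mathrm{root}$ is exactly the right idea, but it must also be applied to exclude $v^*=w^*\in V_R$. The paper circumvents this by first proving (its Claim~2) that $LCA(u,v)=LCA(u^*,v^*)$ whenever $u^*\neq v^*$, which immediately forbids $v^*$ from being a proper ancestor of $w$; it then runs the three-way split of Lemma~\ref{lemma:zigzag} on the pair $(u^*,v^*)$ rather than on $(u,v)$.
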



One additional definition is used in our proof of Proposition~\ref{prop:produces_a_pp}.
Let $u$ be the end of a left (resp. right) edge of $T$.
The \emph{direct oldest ancestor} of a vertex $u$ is 
oldest ancestor of $u$ that can reach $u$ using left (resp. right) edges only.
We denote by $DOA(u)$ that vertex.

Figure \ref{younger_and_direct_oldest_ancestor} shows an example of direct oldest ancestor.

\begin{figure}[H]
$$
\begin{array}{c}
	\includegraphics[scale=0.8]{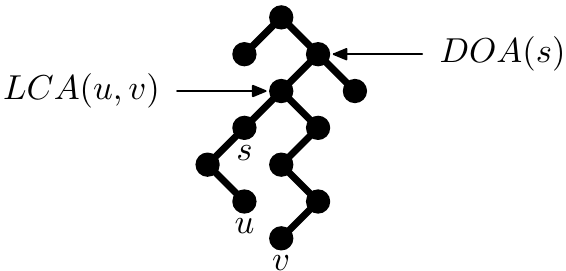}
\end{array}
$$
\caption{
Example of lowest common ancestor and direct oldest ancestor
\label{younger_and_direct_oldest_ancestor}
}
\end{figure}

\begin{proof}
Let $T$ be any binary tree, and set $A(T) = (T, \alpha_L, \alpha_R)$.
Because $\Lambda(T) = F(A(T))$, it is enough to prove that $A(T)$ satisfies the condition of Lemma~\ref{lem:ana_that_are_pp}.
Let $u, v$ be two different vertices of $V_L$ such that $X(u) < X(v)$.
(The case where $u,v \in V_R$ and $Y(u)<Y(v)$ is analogous.)

We want to prove that $Y(u) \le Y(v)$. As $u$ and $v$ are in $V_L$, we have $Y(u) = Y(DOA(u))$ and $Y(v)=Y(DOA(v))$, and we shall rather prove that $Y(DOA(u)) \le Y(DOA(v))$.

If $DOA(u) = DOA(v)$, the above claim is clear. Let us therefore assume that $DOA(u) \neq DOA(v)$.

Because $u$ and $v$ are in $V_L$, we obtain immediately that $t(DOA(u)) = t(u)-1$ and $t(DOA(v))=t(v)-1$. Moreover, since $X(u) < X(v)$ Lemma~\ref{lem:correctness} and the definitions of Subsection~\ref{subsec:hookformula} ensure that $u <_{\alpha_L} v$. We now distinguish two cases, following the definition of $\alpha_L$.

\paragraph*{Case 1: $t(u) < t(v)$.}
\begin{itemize}
 \item If $DOA(u)$ is the root then $Y(DOA(u))=0$ and $Y(DOA(u))\le Y(DOA(v))$.
 \item If $DOA(v)$ is the root, since $t(u)<t(v)$, $DOA(u)$ is the root and we conclude as above.
 \item Otherwise, $DOA(u)$ and $DOA(v)$ are the end of a right edge. From $t(u) < t(v)$, we obtain that $t(DOA(u)) < t(DOA(v))$. Hence $DOA(u) <_{\alpha_R} DOA(v)$. As before, we deduce that $Y(DOA(u)) < Y(DOA(v))$.
\end{itemize}

\paragraph*{Case 2: $t(u)=t(v)$ and $z(u)<z(v)$.} ~\\
Since $t(u)=t(v)$ we have $t(DOA(u)) = t( DOA(v) )$.
We deduce that the following equivalence holds: $z(DOA(u)) < z(DOA(v)) \Leftrightarrow DOA(u) <_{\alpha_R} DOA(v) \Leftrightarrow Y(DOA(u)) < Y(DOA(v))$.
We shall therefore prove that $z(DOA(u)) < z(DOA(v))$.
This will follow from the two claims below:
\begin{enumerate}
  \item \label{cond:a} $LCA(u,v) \neq u$ and $LCA(u,v) \neq v$.
  \item \label{cond:b} $LCA( u,v ) = LCA( DOA(u), DOA(v) )$.
\end{enumerate}

\emph{Proof of Claim}~\ref{cond:a}. As $u,v$ are in $V_L$, $u$ and $v$ can't be the root. We deduce that $DOA(u) \neq u$ and $DOA(v) \neq v$. Because we assumed that $DOA(u) \neq DOA(v)$ and $t(u)=t(v)$, we deduce that $LCA(u,v) \neq u$ and $LCA(u,v) \neq v$.

\emph{Proof of Claim}~\ref{cond:b}. This is also a consequence of $DOA(u) \neq DOA(v)$ and $t(u)=t(v)$.

\begin{itemize}
 \item If $LCA( DOA(u),DOA(v) ) \neq DOA(u)$ and $LCA( DOA(u), DOA(v) ) \neq DOA(v)$,
we apply Lemma \ref{lemma:zigzag} simultaneously on $u$ and $v$ on one side, and on $DOA(u)$ and $DOA(v)$ on the other side. Because $LCA( u,v ) = LCA( DOA(u), DOA(v) )$, we deduce that:
$E(T,u) \varsubsetneq  E(T,v) \Leftrightarrow E(T,DOA(u)) \varsubsetneq  E(T,DOA(v))$. With Corollary~\ref{cor:l_bijection}, this is equivalent to
$z(u) < z(v) \Leftrightarrow  z(DOA(u)) < z(DOA(v))$. As $z(u)<z(v)$, we conclude that $z(DOA(u)) < z(DOA(v))$.
 \item If $LCA( DOA(u),DOA(v) ) = DOA(u)$, then
we apply Lemma \ref{lemma:zigzag} on $DOA(u)$ and $DOA(v)$ to deduce that $E(T,DOA(u)) \varsubsetneq  E(T,DOA(v))$, hence $z(DOA(u))<z(DOA(v))$.
 \item We finally prove that the case $LCA( DOA(u),DOA(v) ) = DOA(v)$ is not possible. Indeed, assume that $LCA( DOA(u),DOA(v) ) = DOA(v)$. If $DOA(v)$ were the root of $T$, then we would have $z(DOA(v)) =1$, which is a contradiction to $z(u)<z(v)$. So $DOA(v)$ is the end of a left (resp. right) edge. Hence by definition, $v$ is in the right (resp. left) subtree of $DOA(v)$. But from Claim~\ref{cond:b}, we have $LCA( u,v ) = DOA(v)$. Lemma~\ref{lemma:zigzag} then ensures that $z(v)<z(u)$, which is a contradiction to $z(u)<z(v)$. \qedhere
\end{itemize}
\end{proof}

\subsection{Proving that $\Lambda$ is the inverse of $\Psi$}\label{sub:bij_proof}

We want to prove that the maps $\Psi$ and $\Lambda$ defined in the previous subsections are inverse of one another. First of all, we define a \emph{ diagram} to be a finite collection of cells in the square grid $\mathbb{N}\times\mathbb{N}$. Then, define the set $D_0$ (resp. $D_1$) of all the  diagrams that avoid the pattern $\begin{array}{c}\includegraphics[scale=0.6]{images/pattern_3}\end{array}$ (resp. $\begin{array}{c}\includegraphics[scale=0.6]{images/pattern_4}\end{array}$). We remark that every \pp\ belongs to $D_0$, since it is convex and contains the bottom-most point of the minimum rectangle bounding it. Moreover, every non-ambiguous tree belongs to $D_1$ by definition.

We extend the map $\myS$ defined in Subsection \ref{sub:bij_pp_trees} to the set $D_0$, defining $\hat{\myS}$ as follows:
for every diagram $U\in D_0$, $\hat{\myS}(U)$ is the diagram obtained
by replacing every $1$ of $U$ occurring as the bottom-most $1$ of a pattern $\begin{array}{c}\includegraphics[scale=0.6]{images/pattern_4}\end{array}$ by a $0$.
Analogously, we can define the extension $\hat{F}$ of $F$ to the set $D_1$:
for every diagram $W\in D_1$, $\hat{F}(W)$ is the diagram obtained by replacing each occurrence of the pattern $\begin{array}{c}\includegraphics[scale=0.6]{images/pattern_3}\end{array}$ in $W$ with an occurrence of $\begin{array}{c}\includegraphics[scale=0.6]{images/pattern_4}\end{array}$.
Obviously, we have that $\hat{\myS}:D_0\to D_1$, $\hat{F}:D_1\to D_0$, $\hat{\myS}|_{\mathcal{P}}=S$, and $\hat{F}|_{Im(A)}=F$, where $\mathcal{P}$ is the set of all \pps and $Im(A)$ is the image of the map $T\mapsto A(T)$. By definition of $\hat{\myS}$ and $\hat{F}$, we easily deduce that:

\begin{lemma}\label{lem:extensions}
$\hat{\myS}=\hat{F}^{-1}$.
\end{lemma}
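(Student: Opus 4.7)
The plan is to prove the two equalities $\hat F\circ\hat{\myS}=\mathrm{id}_{D_0}$ and $\hat{\myS}\circ\hat F=\mathrm{id}_{D_1}$ separately, after first verifying that $\hat{\myS}$ maps $D_0$ to $D_1$ and that $\hat F$ maps $D_1$ to $D_0$. These well-definedness checks are quick: any surviving occurrence of the forbidden pattern after applying $\hat{\myS}$ (resp.\ $\hat F$) would mean that the procedure missed a cell it was supposed to remove (resp.\ add), contradicting its exhaustive definition.

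For $\hat F\circ\hat{\myS}=\mathrm{id}_{D_0}$, fix $U\in D_0$ and set $V=\hat{\myS}(U)$. Since $\hat{\myS}$ only removes cells and $\hat F$ only adds cells, one has $V\subseteq U$ and $V\subseteq\hat F(V)$ for free. The inclusion $\hat F(V)\subseteq U$ is immediate from $U\in D_0$: any cell added by $\hat F$ is the corner of an open-corner pair inside $V\subseteq U$, and because $U$ avoids pattern $\begin{array}{c}\includegraphics[scale=0.6]{images/pattern_3}\end{array}$, this corner already lies in $U$. For the reverse inclusion, given a removed cell $c=(x_1,y_2)\in U\setminus V$, I want to produce an open-corner pair in $V$ with corner $c$. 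The key observation is that the cell $(x_{\min},y_2)$ of $U$ with $x_{\min}<x_1$ \emph{minimal} has no cell of $U$ strictly above it in its column, hence is not removed by $\hat{\myS}$; symmetrically one selects the surviving western witness in row $x_1$.

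For $\hat{\myS}\circ\hat F=\mathrm{id}_{D_1}$, the argument is symmetric: fix $W\in D_1$, set $U=\hat F(W)$, and show that $\hat{\myS}$ removes exactly the cells of $U\setminus W$. Every cell added by $\hat F$ is by construction the corner of an SW--NE pair of cells in $W\subseteq U$, hence removed by $\hat{\myS}$. The only delicate point is to show that no cell of $W$ is removed. Suppose for contradiction that $c=(x_1,y_2)\in W$ has witnesses $(x_2,y_2),(x_1,y_1)\in U$. If a witness lies in $U\setminus W$, then the definition of $\hat F$ provides a replacement in $W$ with a strictly smaller $x$-coordinate in column $y_2$, respectively a strictly smaller $y$-coordinate in row $x_1$. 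After these substitutions I obtain witnesses in $W$ which, together with $c\in W$, form an occurrence of the pattern $\begin{array}{c}\includegraphics[scale=0.6]{images/pattern_4}\end{array}$, contradicting $W\in D_1$.

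The step I expect to require the most care is this last witness-substitution: one must verify that after replacing a witness from $U\setminus W$ by one in $W$ the coordinates still satisfy the strict inequalities $x_2''<x_1$ and $y_1''<y_2$, so that the forbidden triple really does sit inside $W$. Everything else amounts to unfolding the pattern-avoidance definitions of $D_0$ and $D_1$ together with the exhaustive nature of the two rewriting operations.
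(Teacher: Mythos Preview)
Your argument is correct. The paper gives no real proof of this lemma: it simply asserts that the claim follows ``by definition of $\hat{\myS}$ and $\hat{F}$''. Your two-sided verification (showing $\hat F\circ\hat{\myS}=\mathrm{id}_{D_0}$ and $\hat{\myS}\circ\hat F=\mathrm{id}_{D_1}$ separately) is the natural way to make this precise, and your minimal-coordinate trick for producing surviving witnesses in $V=\hat{\myS}(U)$, together with the witness-substitution for the reverse direction, are exactly the details one must check. In short, you have supplied the argument the paper left implicit; the approaches are the same, yours is just fully written out.
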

%
%
%

The final step toward proving that $\Psi$ and $\Lambda$ are inverse of one another is the following lemma:

\begin{lemma}\label{lem:inverse}
For any binary tree $T$, we have $A(T) = \myS(\Lambda(T))$.
\end{lemma}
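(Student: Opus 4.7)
The plan is to derive this lemma as an essentially immediate consequence of Lemma~\ref{lem:extensions}, together with the previously established facts that $A(T)$ is a non-ambiguous tree and that $\Lambda(T)$ is a parallelogram polyomino. The idea is that once $\hat{\myS}$ and $\hat{F}$ have been shown to be inverse of one another on the larger sets $D_0$ and $D_1$, the desired identity becomes just a statement that both $\myS$ and $F$ agree with these extensions on the relevant inputs.

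Concretely, I would proceed as follows. First, invoke Lemma~\ref{lem:correctness} to know that $A(T)$ is a non-ambiguous tree, hence lies in $D_1$ and more precisely in the image of the map $T \mapsto A(T)$; this allows me to write $F(A(T)) = \hat{F}(A(T))$, using that $\hat{F}|_{Im(A)} = F$. Second, invoke Proposition~\ref{prop:produces_a_pp} to know that $\Lambda(T) = F(A(T))$ is a parallelogram polyomino, so that $\myS(\Lambda(T)) = \hat{\myS}(\Lambda(T))$, using that $\hat{\myS}|_{\mathcal{P}} = \myS$. Third, apply Lemma~\ref{lem:extensions} to obtain
\begin{equation*}
\myS(\Lambda(T)) \;=\; \hat{\myS}(\hat{F}(A(T))) \;=\; A(T),
\end{equation*}
which is exactly the claim.

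The only subtlety worth double-checking is that each of the four assertions \textquotedblleft $A(T) \in D_1$\textquotedblright, \textquotedblleft $A(T) \in Im(A)$\textquotedblright, \textquotedblleft $\Lambda(T) \in \mathcal{P}$\textquotedblright, and \textquotedblleft $\hat{\myS} = \hat{F}^{-1}$\textquotedblright\ really has been established in the preceding text; assuming they have, no further work is required. I do not anticipate a hard step here: the whole purpose of introducing the extensions $\hat{\myS}, \hat{F}$ on $D_0$ and $D_1$, and of proving Lemma~\ref{lem:extensions}, was precisely to turn this lemma into a one-line diagram chase. If anything, the only place where something genuine could go wrong is confirming that the local operations defining $F$ on $A(T)$ and those defining $\hat{F}$ on a generic element of $D_1$ coincide cell by cell, but this is immediate from the fact that $F$ was defined as the pattern-replacement rule that $\hat{F}$ extends, so no further computation is needed.
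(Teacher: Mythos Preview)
Your proposal is correct and follows essentially the same approach as the paper: the paper also invokes Proposition~\ref{prop:produces_a_pp} to ensure $\Lambda(T)$ is a parallelogram polyomino, then uses Lemma~\ref{lem:extensions} to conclude $\myS(\Lambda(T))=\hat{\myS}(\hat{F}(A(T)))=A(T)$ in a single line. Your write-up is slightly more explicit about why the restrictions $\hat{F}|_{Im(A)}=F$ and $\hat{\myS}|_{\mathcal{P}}=\myS$ apply, but the argument is the same.
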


\begin{proof}
Let $T$ be a binary tree. Proposition \ref{prop:produces_a_pp} states that $\Lambda(T) = F(A(T))$ is a \pp, hence $\myS(\Lambda(T))$ is well defined.
Then, Lemma \ref{lem:extensions} implies that
\par
\vspace{\abovedisplayskip}
\hfill $\displaystyle \myS(\Lambda(T))=\myS(F(A(T)))=\hat{\myS}(\hat{F}(A(T)))=A(T)\text{.}$ \qedhere
\end{proof}

\begin{proposition}
$\Psi$ and $\Lambda$ are inverse of one another.
\end{proposition}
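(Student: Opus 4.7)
The plan is to derive the two equalities $\Psi \circ \Lambda = \mathrm{id}$ and $\Lambda \circ \Psi = \mathrm{id}$ from what has already been established, using Lemma~\ref{lem:inverse} as the key input and bypassing any further direct combinatorial work.

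First I would prove $\Psi \circ \Lambda = \mathrm{id}$ on binary trees. Let $T$ be a binary tree. By construction, $\Lambda(T) = F(A(T))$, which is a parallelogram polyomino by Proposition~\ref{prop:produces_a_pp}. Then $\Psi(\Lambda(T))$ is, by definition of $\Psi$, the underlying binary tree $\funT(\myS(\Lambda(T)))$ of the non-ambiguous tree obtained by enlightening $\Lambda(T)$. Lemma~\ref{lem:inverse} says precisely $\myS(\Lambda(T)) = A(T)$, whose underlying binary tree is $T$ (by definition of $A(T) = (T,\alpha_L,\alpha_R)$ as a non-ambiguous tree of shape $T$, ensured by Lemma~\ref{lem:correctness}). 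Hence $\Psi(\Lambda(T)) = T$.

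The remaining step, $\Lambda \circ \Psi = \mathrm{id}$, follows by a cardinality and injectivity argument. Since $\Psi \circ \Lambda = \mathrm{id}$, the map $\Lambda$ is injective, so the set of parallelogram polyominoes of size $n$ has at least as many elements as binary trees with $n$ vertices. Conversely, Proposition~\ref{prop:bij_pp_trees} states that $\Psi$ is injective, so there are at least as many binary trees with $n$ vertices as parallelogram polyominoes of size $n$. As both are finite sets, we conclude that they are in bijection, and both $\Psi$ and $\Lambda$ are bijections. Since $\Psi \circ \Lambda = \mathrm{id}$ on the (finite) set of binary trees, $\Lambda$ is the two-sided inverse of $\Psi$, which completes the proof.

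There is no genuine obstacle at this point: all the non-trivial work has been done in Lemma~\ref{lem:inverse}, Proposition~\ref{prop:produces_a_pp} and Proposition~\ref{prop:bij_pp_trees}. The only small care needed is to invoke the classical equinumerosity of binary trees with $n$ vertices and parallelogram polyominoes of size $n$ (both counted by the Catalan number $C_n$, as recalled in the introduction) in order to turn the one-sided inverse into a two-sided inverse.
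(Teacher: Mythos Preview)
Your proposal is correct and follows essentially the same approach as the paper: deduce $\Psi\circ\Lambda=\mathrm{id}$ from Lemma~\ref{lem:inverse}, then combine with the injectivity of $\Psi$ from Proposition~\ref{prop:bij_pp_trees} to conclude. The paper's version is slightly more direct---it observes that $\Psi\circ\Lambda=\mathrm{id}$ already makes $\Psi$ surjective, hence bijective, so $\Lambda=\Psi^{-1}$ without any cardinality or Catalan-number argument; your last paragraph about invoking equinumerosity is therefore unnecessary (you have in fact just reproved it), but this does not affect correctness.
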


\begin{proof}
By Lemma~\ref{lem:inverse}, we get that $\Psi \circ \Lambda$ is the identity over the set of binary trees. This implies in particular that $\Psi$ is surjective, and that $\Lambda$ is injective.
Recall from Proposition~\ref{prop:bij_pp_trees} that $\Psi$ is also injective, so that we obtain that $\Psi$ is a bijection whose inverse is $\Lambda$.
\end{proof}

\section{Perspectives}
\label{sec:open}
In this work, we used the notion of non-ambiguous tree for the study of various combinatorial problems.
Several questions related to these objects remain open.

The first perspective concerns Bessel functions. We recall that parallelogram polyominoes
are enumerated with respect to their area, width and height through the quotient
of analogues of the Bessel functions $J_0$ and $J_1$ \cite{mbmV}.
This {\em double}  appearance of Bessel functions in close contexts is striking,
but unexplained for the moment.
In the same vein, recall that we were able to build combinatorial objects which
interpret combinatorially the integers $b_k$ that appear in the development of $J_0$ --
namely complete non-ambiguous trees. Is it possible to generalize this construction
to coefficients of general Bessel functions $J_k$?

For what concerns generalizations, an intriguing question would be to
define and study nice analogues of non-ambiguous trees in higher dimensions.
This is indeed work in progress.

We conclude with an open problem of a more algebraic nature.
The Loday-Ronco Hopf algebra $YSym$ of planar binary trees \cite{LR} has basis elements indexed
by planar binary trees. In this setting, two permutations are in the same class when they have the same
underlying tree, when coded as binary search trees.
It would be interesting to study an analogous problem in which we refine these classes
with respect to the underlying {\em non-ambiguous} tree.

\subsection*{Acknowledgements}
\label{sec:ack}
We thank Philippe Nadeau for helpful discussions around rectangular alternative tableaux.
We are also grateful to Christian Krattenthaler and Cyril Banderier for their advice that guided our proof of Corollary~\ref{identity_ANAC}.

This research was driven by computer exploration using the open-source mathematical software \texttt{Sage}~\cite{sage} and its algebraic combinatorics features developed by the \texttt{Sage-Combinat} community~\cite{Sage-Combinat}.

\bibliographystyle{alpha}
\bibliography{bibliography2}
\label{sec:biblio}

\end{document}